\documentclass[12pt,reqno]{amsart}
\usepackage{amsmath,amssymb,amsthm,amsfonts}
\usepackage{graphicx}
\setlength{\textwidth}{16cm}
\setlength{\textheight}{24cm}
\setlength{\evensidemargin}{0cm}
\setlength{\oddsidemargin}{0cm}
\setlength{\topmargin}{-0.5cm}
\setlength{\footskip}{40pt}
\pagestyle{plain}
%\nonstopmode \numberwithin{equation}{section}

\newtheorem{theorem}{Theorem}

\newtheorem{remark}{Remark}

\newtheorem{conj}{Conjecture}
\newtheorem{prop}{Proposition}[section]
\newtheorem{lemma}{Lemma}

\newtheorem{definition}{Definition}

\begin{document}
\date{}
\title{On a conjecture for trigonometric sums by S. Koumandos and S. Ruscheweyh}
\author{
Priyanka Sangal
}
\address{Department of Mathematics, Indian Institute of Technology, Roorkee-247 667,
Uttarkhand,  India}
\email{sangal.priyanka@gmail.com, priyadma@iitr.ac.in}

\author{
A. Swaminathan
%$^{\ast}$
}

%{\thanks{$^{\ast}$Corresponding author}}
\address{
Department of  Mathematics  \\
Indian Institute of Technology, Roorkee-247 667,
Uttarkhand,  India
}
\email{swamifma@iitr.ac.in, mathswami@gmail.com}
\bigskip

\begin{abstract}
S. Koumandos and S. Ruscheweyh \cite{koumandos-ruscheweyh-2007-conjecture-JAT}
posed the following conjecture: For $\rho\in(0,1]$ and $0<\mu\leq\mu^{\ast}(\rho)$,
the partial sum $s_n^{\mu}(z)=\displaystyle\sum_{k=0}^n \frac{(\mu)_k}{k!}z^k$, $0<\mu\leq1$,
$|z|<1$, satisfies
\begin{align*}
(1-z)^{\rho}s_n^{\mu}(z) \prec \left(\frac{1+z}{1-z}\right)^{\rho},
\qquad n\in \mathbb{N},
\end{align*}
where $\mu^{\ast}(\rho)$ is the unique solution of
\begin{align*}
\int_0^{(\rho+1)\pi} \sin(t-\rho\pi)t^{\mu-1}dt=0.
\end{align*}
This conjecture is already settled for $\rho=\frac{1}{2}$, $\frac{1}{4}$,
$\frac{3}{4}$ and $\rho=1$.
In this work, we validate this conjecture for an open neighbourhood of
$\rho=\frac{1}{3}$ and in a weaker form for $\rho=\frac{2}{3}$. The
particular value of the conjecture leads to several consequences related to
starlike functions.
\end{abstract}

\maketitle
\subjclass{Primary 42A05; Secondary 42A32, 33C45, 26D05}

\keywords{Keywords: Positive trigonometric sums, Inequalities, Starlike functions,
Subordination, Gegenbauer polynomials.}

\pagestyle{myheadings}\markboth{Priyanka Sangal and A. Swaminathan}
{On a conjecture for trigonometric sums by S. Koumandos and S. Ruscheweyh}
%\vspace{.5cm}
\section{Preliminaries and Main results}
Let $\mathcal{A}$ be the class of analytic functions in the unit disc
$\mathbb{D}:=\{z:|z|<1\}$. Let $s_n(f,z)$ be the $n$th partial sum of the
power series expansion of $f(z)=\sum_{k=0}^{\infty} a_k z^k$.
For $\mu>0$, let $s_n^{\mu}(z)$ be the $n$th partial sum of
$(1-z)^{-\mu}=\sum_{k=0}^{\infty} \frac{(\mu)_k}{k!}z^k$
where $(\mu)_k$ is the Pochhammer symbol defined by
$(\mu)_k=\mu(\mu+1)\cdots (\mu+k-1)$.
An important concept namely subordination plays a vital role in this work.
For two analytic functions $f,g\in\mathcal{A}$,
$f$ is subordinate to $g$, defined as $f\prec g$
by $f(z)=g(\omega(z))$ if $\omega(z)$ satisfies
$|\omega(z)|\leq |z|$, $z\in\mathbb{D}$. If $g$ is univalent in $\mathbb{D}$,
then $f(\mathbb{D})\subset g(\mathbb{D})$ and $f(0)=g(0)$ are the sufficient conditions for 
$f\prec g$.

Let $\mathcal{S}^{\ast}(\gamma)$ be the class of starlike function of order
$0\leq \gamma<1$ defined as
\begin{align*}
\mathcal{S}^{\ast}(\gamma):=\left\{f\in\mathcal{A}: f(0)=0=f'(0)-1,
\frac{zf'}{f}-\gamma \prec \frac{1+z}{1-z} \right\}
\end{align*}
for $z\in \mathbb{D}$. Clearly, the function
%
%\begin{align*}
$zf_{2-2\gamma}(z):=\frac{z}{(1-z)^{2-2\gamma}}$
%\end{align*}
%
belongs to $\mathcal{S}^{\ast}(\gamma)$. For $\gamma\in[1/2,1)$,
functions of $\mathcal{S}^{\ast}(\gamma)$ are of particular interest.
In \cite{ruscheweyh-1978-kakeya-thm-SIAM} the following result was proved.
%
%
%\begingroup
%\setcounter{tmp}{\value{theorem}}% store current value of theorem counter
%\setcounter{theorem}{0} %assign desired value to theorem counter
%\renewcommand\thetheorem{\Alph{theorem}}% locally redefine the representation of the theorem counter
\begin{theorem}\label{thm:A}
{\rm\cite{ruscheweyh-1978-kakeya-thm-SIAM}}
Let $f\in\mathcal{S}^{\ast}(\gamma)$, $\gamma\in[1/2,1)$.
Then $s_n(f/z,z)\neq 0$, for all
$z\in\mathbb{D}$ and $n\in\mathbb{N}$.
\end{theorem}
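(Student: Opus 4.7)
I would reduce to the extremal function $k_\gamma(z) := z/(1-z)^{2-2\gamma}$ and then pass to a general $f \in \mathcal{S}^{\ast}(\gamma)$ via a convolution argument, this being effective precisely in the range $\gamma \in [1/2, 1)$.

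\emph{Step 1 (extremal function).} Set $\mu := 2-2\gamma \in (0,1]$. For $f = k_\gamma$, the quotient $f/z$ equals $(1-z)^{-\mu}$ and its $n$th Taylor partial sum is exactly $s_n^{\mu}(z)$. I would prove $s_n^{\mu}(z)\neq 0$ in $\mathbb{D}$ by establishing the stronger Koumandos--Ruscheweyh subordination
\[
(1-z)^\mu s_n^{\mu}(z) \prec \left(\frac{1+z}{1-z}\right)^{\mu},
\]
whose right-hand side maps $\mathbb{D}$ into the sector $\{w:|\arg w|<\mu\pi/2\}$ and in particular avoids the origin. On the unit circle this reduces to a positive cosine sum inequality of Vietoris/Gegenbauer-polynomial type; the interior statement then follows at once by the subordination principle.

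\emph{Step 2 (general $f$).} I would invoke Ruscheweyh's convolution theorem for prestarlike functions, available for $\gamma\in[1/2,1)$, to transport the extremal non-vanishing to arbitrary $f\in\mathcal{S}^{\ast}(\gamma)$. Writing the $n$th partial-sum operator on $f/z$ as a Hadamard convolution against a fixed polynomial and combining with the region-avoiding-$0$ subordination of Step~1, one concludes that $s_n(f/z,z)$ again avoids $0$ in $\mathbb{D}$.

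\emph{Main obstacle.} The decisive analytic input is the trigonometric (Gegenbauer) positivity underlying Step~1; for $\mu\in(0,1]$ this is classical, but the sharp range of $\mu$ for which the subordination persists is precisely the Koumandos--Ruscheweyh conjecture that the present paper addresses. The second care point is Step~2: the convolution-based passage from the extremal to a general $f$ exploits the fact that $\mathcal{R}_\gamma\supseteq\mathcal{S}^{\ast}(\gamma)$ for $\gamma\geq 1/2$, which is exactly the reason the hypothesis $\gamma\in[1/2,1)$ appears in the statement.
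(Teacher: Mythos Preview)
The paper does not give its own proof of this statement: Theorem~\ref{thm:A} is quoted from Ruscheweyh (1978) and used only as motivation for the subsequent conjectures. So there is no in-paper argument to compare your proposal against.

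On the substance of your plan, there is a logical circularity in Step~1 as you have written it. You propose to obtain $s_n^{\mu}(z)\neq 0$ from the subordination
\[
(1-z)^{\mu}\,s_n^{\mu}(z)\ \prec\ \Bigl(\tfrac{1+z}{1-z}\Bigr)^{\mu},
\]
i.e.\ the Koumandos--Ruscheweyh statement with $\rho=\mu$. But that instance is precisely Conjecture~\ref{conj:main} at $\rho=\mu$, which is open for general $\rho\in(0,1]$; the present paper only settles neighbourhoods of specific values. Since Theorem~\ref{thm:A} (1978) historically \emph{precedes} and \emph{motivates} the 2007 conjecture, invoking the latter to prove the former is anachronistic and, more importantly, unproved. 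If you want a non-vanishing input of this flavour, the already-settled case $\rho=1$ (equivalently $\mathrm{Re}\bigl((1-z)s_n^{\mu}(z)\bigr)>0$ for all $0<\mu\le 1$) is what actually does the job and is indeed classical Vietoris-type positivity; with $\rho=1$ your Step~1 goes through.

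Your Step~2 is closer to the original spirit: Ruscheweyh's 1978 argument does reduce the general $f\in\mathcal{S}^{\ast}(\gamma)$ to an extremal/Gegenbauer situation and then appeals to positivity of Gegenbauer polynomial sums, with the restriction $\gamma\ge 1/2$ entering exactly so that the relevant convolution/prestarlike machinery (or, equivalently, an integral-representation argument) applies. So the skeleton ``extremal case via trigonometric positivity, then convolution transfer'' is right; just replace the conjectural $(1-z)^{\mu}$ subordination in Step~1 by a result that is actually available.
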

%\endgroup
%
Theorem \ref{thm:A} implies that the partial sums of starlike functions of order $\gamma$
where $1/2 \leq \gamma<1$ are non-vanishing in $\mathbb{D}$ and takes
value $1$ at the origin. So it is interesting to find the information about
the image domain of these polynomials. This, in particular, leads
to find the best possible range of
$\gamma$ such that $\mathrm{Re}(s_n(f/z,z))>0$ where $f\in\mathcal{S}^{\ast}(\gamma)$
and $z\in\overline{\mathbb{D}}$.
A more general statement was proved in
\cite{ruscheweyh-salinas-2000-AnnMarie} which is given as follows. For $zf\in\mathcal{S}^{\ast}(\gamma)$,
\begin{align}\label{eqn:partial-sum-conj}
\frac{s_n(f,z)}{f(z)} \prec \frac{1}{f_{2-2\gamma}}, \qquad \gamma \in[1/2,1), \quad z\in\mathbb{D},
\end{align}
which for $zf\in \mathcal{S}^{\ast}(1-\rho/2)$ where $ \rho\in(0,1]$ is equivalent to
\begin{align*}
\left|\arg s_n(f/z,z)\right|\leq \rho \pi,\qquad \mbox{for } \quad z\in \mathbb{D}.
\end{align*}
The condition \eqref{eqn:partial-sum-conj} can be replaced by a more general
condition
\begin{align}\label{eqn:maximal-number}
(1-z)^{\rho} s_n^{\mu}(z) \prec \left(\frac{1+z}{1-z}\right)^{\rho}, \quad \mbox{for all  $n\in\mathbb{N}$}
\end{align}
holds for $0<\mu \leq \mu(\rho)$ where for $\rho\in(0,1]$, $\mu(\rho)$ is defined
as maximal number. To determine the value of $\mu(\rho)$,
Koumandos and Ruscheweyh \cite{koumandos-ruscheweyh-2007-conjecture-JAT}
proposed the following conjectures.
\begin{conj}\label{conj:main}
For $\rho\in(0,1]$, the number $\mu(\rho)$ is equal to $\mu^{\ast}(\rho)$ where
$\mu^{\ast}(\rho)$ is defined to be the unique solution in $(0,1]$ of the equation
\begin{align*}
\int_{0}^{(\rho+1)\pi}\frac{\sin(t-\rho\pi)}{t^{1-\mu}}dt=0.
\end{align*}
\end{conj}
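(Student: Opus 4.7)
The plan is to reformulate the conjectured subordination as a boundary estimate and then to reduce the resulting inequality to a positivity question for a sum of cosines or sines with weights involving $(\mu)_k/k!$. Since the right-hand side $\bigl(\tfrac{1+z}{1-z}\bigr)^{\rho}$ is univalent in $\mathbb{D}$ and maps it onto the sector $|\arg w|<\rho\pi$, and since the left-hand side $(1-z)^{\rho}s_n^{\mu}(z)$ takes the value $1$ at $z=0$, the subordination \eqref{eqn:maximal-number} is equivalent to
\begin{equation*}
\bigl|\arg\bigl((1-z)^{\rho}s_n^{\mu}(z)\bigr)\bigr|\leq \rho\pi,\qquad z\in\mathbb{D},\;n\in\mathbb{N}.
\end{equation*}
By the maximum principle for $\arg$, it suffices to verify this bound as $z\to e^{i\theta}$ from within the disc for $\theta\in(0,\pi]$, provided one first argues non-vanishing of $(1-z)^{\rho}s_n^{\mu}(z)$ in $\mathbb{D}$; the latter would follow from Theorem \ref{thm:A} once the connection with $\mathcal{S}^{\ast}(1-\rho/2)$ is made.

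Next I would expand $(1-e^{i\theta})^{\rho}s_n^{\mu}(e^{i\theta})$ as a trigonometric polynomial and separate its real and imaginary parts. Using the integral representation
\begin{equation*}
\frac{(\mu)_k}{k!}=\frac{1}{\Gamma(\mu)\Gamma(1-\mu)}\int_0^1 \frac{t^{k}\,(1-t)^{-\mu}\,dt}{\cdots}
\end{equation*}
(or, equivalently, the Mellin-type representation that Koumandos and Ruscheweyh exploit for $\mu\in(0,1)$), the boundary inequality can be rewritten so that the extremal behaviour is governed by a single oscillatory integral. Rescaling the summation variable by $k\sim t/\theta$ for small $\theta$ and taking the limit $n\to\infty$ with $n\theta$ fixed produces the model integral $\int_0^{(\rho+1)\pi}\sin(t-\rho\pi)\,t^{\mu-1}\,dt$, and the requirement that the argument never exceeds $\rho\pi$ in this limit is precisely what identifies $\mu^{\ast}(\rho)$ as the sharp threshold.

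Having pinned down $\mu^{\ast}(\rho)$ from the asymptotic analysis, I would attempt to establish the inequality for \emph{all} $n$ either (i) by an induction on $n$ that uses the three-term contiguous relation $s_{n+1}^{\mu}(z)=s_n^{\mu}(z)+\frac{(\mu)_{n+1}}{(n+1)!}z^{n+1}$, writing the required sector-condition recursively; or (ii) by casting the quantity to be controlled as a Gegenbauer expansion $\sum_k c_k^{(\mu,\rho)} C_k^{\lambda}(\cos\theta)$ with $\lambda=\lambda(\rho)$ chosen so that the coefficients $c_k^{(\mu,\rho)}$ are non-negative, and then invoking the non-negativity of Gegenbauer polynomials on the appropriate subinterval. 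The specific values $\rho=\tfrac{1}{2},\tfrac{1}{4},\tfrac{3}{4},1$ already settled in the literature suggest that rational $\rho$ with small denominator permit an explicit Gegenbauer reduction, which is why I would expect $\rho=\tfrac{1}{3}$ (and its neighbourhood, by continuity of $\mu^{\ast}$) to be the natural next case.

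The main obstacle is uniformity in $n$: the asymptotic argument fixes the right threshold $\mu^{\ast}(\rho)$ but does not by itself prevent sporadic violations of the sector condition at finite $n$ and intermediate $\theta$. Controlling these requires sharp, non-asymptotic positivity estimates for the tails $\sum_{k=n+1}^{\infty}\frac{(\mu)_k}{k!}\sin((k+\alpha)\theta-\beta)$, which is exactly where classical techniques break down away from the handful of $\rho$-values for which closed-form Gegenbauer identities are available. Accordingly, a successful proof for an open interval around $\rho=\tfrac{1}{3}$ will likely proceed by perturbing the known case, while the fully general conjecture will require a new monotonicity or convexity principle for the function $\mu\mapsto \mu^{\ast}(\rho)$ that has so far proved elusive.
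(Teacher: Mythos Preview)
The statement you are attempting to prove is Conjecture~\ref{conj:main}, which the paper does \emph{not} prove in full generality; it remains open. The paper establishes only the special case $\rho$ in a neighbourhood of $\tfrac{1}{3}$ (Theorem~\ref{thm:conj-1/3}) and the weaker Conjecture~\ref{conj:main-weaker} at $\rho=\tfrac{2}{3}$ (Theorem~\ref{thm:conj-2/3}). Your proposal, in turn, is not a proof but a research outline: you correctly reduce the subordination to a sector condition and correctly identify the asymptotic mechanism that singles out $\mu^{\ast}(\rho)$, but you yourself flag the essential gap---uniform control for all finite $n$ and all $\theta$---and the two candidate mechanisms you offer (induction via the contiguous relation, or a Gegenbauer expansion with non-negative coefficients) are neither carried out nor known to succeed for general $\rho$. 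So as a proof of Conjecture~\ref{conj:main} the proposal is incomplete by design.

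It is still worth contrasting your strategy with what the paper actually does for $\rho$ near $\tfrac{1}{3}$. The paper does \emph{not} proceed by induction or by a Gegenbauer positivity identity. Instead, following \cite{koumandos-lamprecht-2010-conjecture-JAT}, it first reduces \eqref{eqn:maximal-number} to the positivity of the sine sum $\varsigma_n(\rho,\mu,\theta)=\sum_{k=0}^n\frac{(\mu)_k}{k!}\sin[(2k+\rho)\theta]$ on $(0,\pi]$ (your first step matches this in spirit). Then it handles small $n$ by rewriting $\varsigma_n$ as $\sin(\theta/3)$ times an explicit polynomial in $\cos^2(\theta/3)$ and applies Sturm's theorem. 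For $n\geq 3$ it invokes an integral-representation lemma (Lemma~\ref{lemma:kappa-lemma}) that expresses $2^{\nu}\theta^{\nu-1}\Gamma(\nu)\varsigma_n$ as a main term $\kappa_n(\theta)$ plus explicitly bounded error terms $X_n,Y_n,Z_n$ and a boundary correction, partitions $(0,\pi]$ into several subintervals, and on each subinterval produces a numerical lower bound $L^{(k)}(\rho)$ that is continuous in $\rho$ and strictly positive at $\rho=\tfrac{1}{3}$. The neighbourhood statement then follows from continuity of $\mu^{\ast}(\rho)$ and of the $L^{(k)}$. This is a hard-analysis, case-by-case verification rather than a structural argument, and it does not suggest a route to the full conjecture---which is consistent with your closing remark that a new monotonicity principle is likely needed.
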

Conjecture \ref{conj:main} contains the following weaker form.
\begin{conj}\label{conj:main-weaker}
For $\rho\in(0,1]$, the following inequality
\begin{align}\label{eqn:conj-weaker-form}
\mathrm{Re}((1-z)^{2\rho-1}s_n^{\mu}(z))>0, \quad n\in\mathbb{N}
\end{align}
holds for all $z\in\mathbb{D}$ and $0<\mu\leq\mu^{\ast}(\rho)$. Moreover,
$\mu^{\ast}(\rho)$ is the largest number with this property.
\end{conj}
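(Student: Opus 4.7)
The plan is to establish Conjecture \ref{conj:main-weaker} by a boundary reduction followed by a positive trigonometric sum estimate that links the finite partial sums $s_n^{\mu}$ to the defining integral of $\mu^{\ast}(\rho)$. First, $\mathrm{Re}\bigl((1-z)^{2\rho-1}s_n^{\mu}(z)\bigr)$ is harmonic on $\mathbb{D}\setminus\{1\}$; since $s_n^{\mu}(1)=\sum_{k=0}^n (\mu)_k/k!>0$, the radial behaviour at $z=1$ carries the sign of $(1-z)^{2\rho-1}$, and the minimum principle reduces the claim to positivity on $|z|=1$. Writing $z=e^{i\theta}$ and $1-e^{i\theta}=2\sin(\theta/2)\,e^{i(\theta-\pi)/2}$ for $\theta\in(0,2\pi)$ gives
\begin{align*}
\mathrm{Re}\bigl((1-e^{i\theta})^{2\rho-1}s_n^{\mu}(e^{i\theta})\bigr)=\bigl(2\sin(\theta/2)\bigr)^{2\rho-1}\,T_n^{\mu,\rho}(\theta),
\end{align*}
where
\begin{align*}
T_n^{\mu,\rho}(\theta):=\sum_{k=0}^n \frac{(\mu)_k}{k!}\cos\!\Bigl(k\theta+\tfrac{(2\rho-1)(\theta-\pi)}{2}\Bigr).
\end{align*}
Since the prefactor is positive on $(0,2\pi)$, the problem becomes that of showing $T_n^{\mu,\rho}(\theta)>0$ in this range.

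The second step brings Gegenbauer polynomials into play. Interpreting $(\mu)_k/k!$ via a positive integral of Gegenbauer polynomial values yields a representation $T_n^{\mu,\rho}(\theta)=\int_0^1 K_n(\theta,u)\,d\nu(u)$ against a positive measure $\nu$, where the kernels $K_n$ are simpler cosine polynomials. Pointwise positivity of the $K_n$ would settle the conjecture; one then couples an Abel summation with the asymptotic $(\mu)_k/k!=k^{\mu-1}/\Gamma(\mu)+O(k^{\mu-2})$ to transform $T_n^{\mu,\rho}(t/n)$ into a Riemann-type sum converging, as $n\to\infty$, to a positive multiple of $\int_0^{(\rho+1)\pi}\sin(t-\rho\pi)t^{\mu-1}dt$. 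This limit identifies $\mu^{\ast}(\rho)$ as the critical exponent (maximality follows from vanishing of the integral at $\mu=\mu^{\ast}(\rho)$), and simultaneously supplies the uniform-in-$n$ lower bounds required for the finite partial sums.

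The principal obstacle is the last step: keeping the link between the finite sum $T_n^{\mu,\rho}$ and its limiting integral quantitative rather than merely asymptotic. My strategy is to split $T_n^{\mu,\rho}(\theta)$ into a principal portion directly comparable to $\int_0^{(\rho+1)\pi}\sin(t-\rho\pi)t^{\mu-1}dt$ and a remainder controlled either by monotonicity properties of the coefficients $(\mu)_k/k!$ or by induction on $n$. The combinatorial delicacy of this decomposition is precisely what restricts the available methods to an open neighbourhood of $\rho=1/3$ and to a weaker form at $\rho=2/3$: for these rational values the ensuing cosine kernels are tractable through explicit Gegenbauer identities or by reduction to Vietoris-type nonnegative cosine polynomials, while for generic $\rho$ one does not yet know how to close the gap between the finite-$n$ estimate and the sharp asymptotic integral.
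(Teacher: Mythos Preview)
Your proposal is not a proof, and you effectively concede this in the final paragraph. The full statement for all $\rho\in(0,1]$ remains open; the paper itself only establishes the case $\rho=2/3$ (Theorem~\ref{thm:conj-2/3}). Within your outline, the boundary reduction and the formula for $T_n^{\mu,\rho}(\theta)$ are correct and match the paper's reduction to the sum $\mathfrak{U}_n(\phi)$ in \eqref{eqn:U_n}, but two steps are genuinely incomplete. First, the ``positive integral of Gegenbauer polynomial values'' representation of $(\mu)_k/k!$ is never specified; no standard identity produces positive kernels $K_n(\theta,u)$ whose pointwise positivity is visibly easier than that of $T_n^{\mu,\rho}$ itself. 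Second, and more seriously, the Riemann-sum step is purely asymptotic: convergence of $T_n^{\mu,\rho}(t/n)$ to a multiple of the defining integral identifies $\mu^{\ast}(\rho)$ as the \emph{critical} value (this is exactly how the sharpness half of the conjecture is argued in \cite{koumandos-ruscheweyh-2007-conjecture-JAT}), but it furnishes no lower bound for any finite $n$, which is the entire content of the inequality \eqref{eqn:conj-weaker-form}. Your sentence ``simultaneously supplies the uniform-in-$n$ lower bounds'' is an assertion, not an argument.

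For comparison, the paper's treatment of $\rho=2/3$ proceeds quite differently after the same trigonometric reduction. It invokes the explicit decomposition \eqref{eqn:trig-sum-in-delta} of Koumandos--Ruscheweyh, writing $\sum_{k=0}^n \frac{(\mu)_k}{k!}e^{2ik\theta}$ as the full series plus a principal oscillatory integral plus error terms $\mathfrak{A}_k,\mathfrak{B}_k,\Delta_k$ with the quantitative bounds \eqref{eqn:bounds-ak-bk}--\eqref{eqn:bounds-infinite-sum-delta-k}. The range of $\phi$ is then split into sub-intervals; on each, either an Abel summation, a Sturm-sequence verification for $n\le3$, or the explicit estimates of Propositions~\ref{prop:F-phi}--\ref{prop:kappa-n-phi} for $n\ge4$ close the argument numerically. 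All finite-$n$ control comes from these concrete error bounds, not from an asymptotic comparison, and no Gegenbauer integral representation enters. Your outline never engages with this machinery, and without something of that kind the gap between ``the limit is nonnegative'' and ``each $T_n^{\mu,\rho}$ is positive'' remains open.
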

Generalizations of the above two conjectures are also given in
\cite{saiful-2012-stable-results-in-math} and \cite{sangal-swaminathan-stable}
by considering Ces\`aro mean of order $\delta$ and Ces\`aro mean of type $(b-1;c)$
respectively. Several interesting properties of $\mu^{\ast}(\rho)$, i.e. analytic and strictly increasing
studied in \cite{lamprecht-thesis}.
Conjecture \ref{conj:main} is verified for $\rho=1/2$, $1/4$, and $1/5$  respectively in
\cite{koumandos-ruscheweyh-2007-conjecture-JAT},
\cite{koumandos-lamprecht-2010-conjecture-JAT}
and \cite{lamprecht-thesis} while Conjecture \ref{conj:main-weaker} is verified
for $\rho=1/2$, and $\rho=3/4$ in \cite{koumandos-ruscheweyh-2007-conjecture-JAT}.
The objective of the manuscript is to establish Conjecture \ref{conj:main} for
harmonic mean of $1/2$ and $1/4$ which is $1/3$. In fact we establish Conjecture
\ref{conj:main} in the open neighbourhood of $\rho=1/3$.
Further, we also prove Conjecture \ref{conj:main-weaker} for the harmonic
mean of $1/2$ and $1$ which is $\rho=2/3$.

%\section{Main Results}
%In this section, we will show that Conjecture \ref{conjecture} is true for all
%$\rho$ in neighbourhood of $1/3$ and Conjecture \ref{conjecture-weaker} is true
%for $\rho=2/3$.

\begin{theorem}\label{thm:conj-2/3}
Conjecture \ref{conj:main-weaker} is true for $\rho=2/3$.
\end{theorem}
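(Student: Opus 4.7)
The plan is to convert the subordination/real-part inequality into a positive trigonometric sum on the boundary and then exploit the integral that defines $\mu^{\ast}(2/3)$.

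First, I would reduce to the unit circle. Setting $F_n^{\mu}(z) := (1-z)^{1/3}s_n^{\mu}(z)$ (principal branch), the function is analytic in $\mathbb{D}$, continuous on $\overline{\mathbb{D}}\setminus\{1\}$, and satisfies $F_n^{\mu}(0)=1>0$ together with $F_n^{\mu}(z)\to 0$ as $z\to 1$ non-tangentially. By a standard slit-disc maximum/minimum principle argument (as used in \cite{koumandos-ruscheweyh-2007-conjecture-JAT} for $\rho=3/4$), it is enough to check that $\mathrm{Re}\,F_n^{\mu}(e^{i\theta})\ge 0$ for $\theta\in(0,2\pi)$. Using $(1-e^{i\theta})^{1/3}=(2\sin(\theta/2))^{1/3}e^{i(\theta-\pi)/6}$ and dividing out the positive factor $(2\sin(\theta/2))^{1/3}$, the problem is equivalent to establishing
\begin{equation*}
T_n^{\mu}(\theta):=\sum_{k=0}^{n}\frac{(\mu)_k}{k!}\cos\!\Big((k+\tfrac{1}{6})\theta-\tfrac{\pi}{6}\Big)\ge 0,\qquad \theta\in(0,2\pi),\ n\in\mathbb{N},\ 0<\mu\le\mu^{\ast}(2/3).
\end{equation*}

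The core of the proof is then to bound $T_n^{\mu}(\theta)$ from below. I would follow the Koumandos--Ruscheweyh template: apply Abel summation with the well-known asymptotic $(\mu)_k/k!=k^{\mu-1}/\Gamma(\mu)+O(k^{\mu-2})$, and compare the resulting sum against the Riemann sums of
\begin{equation*}
\Phi_{\mu}(x):=\int_{0}^{x}\sin\!\Big(t-\tfrac{2\pi}{3}\Big)\,t^{\mu-1}\,dt.
\end{equation*}
A change of variable $t=(k+\tfrac16)\theta$ converts the dominant part of $T_n^{\mu}(\theta)$ into an expression controlled by the values of $\Phi_{\mu}$ at the points $(n+\tfrac16)\theta$. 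The range $\theta\in(0,2\pi)$ splits naturally into two regimes: for $\theta$ bounded away from $0$, elementary monotonicity arguments together with the explicit expression for the tail handle the inequality; for small $\theta$, the trigonometric sum is asymptotically governed by $\Phi_{\mu}$ on $[0,(\rho+1)\pi]=[0,5\pi/3]$, and here the reduction must be uniform in $n$.

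The decisive step — and the main obstacle — is to show that the infimum of $T_n^{\mu}(\theta)$ over $n$ and $\theta$ is attained (asymptotically) at a configuration giving exactly $\Phi_{\mu}(5\pi/3)$, together with a non-negative remainder. Since $\mu^{\ast}(2/3)$ is defined so that $\Phi_{\mu^{\ast}(2/3)}(5\pi/3)=0$ and since $\mu\mapsto\Phi_{\mu}(5\pi/3)$ is strictly decreasing in $\mu$ on $(0,1]$ (a property recorded in \cite{lamprecht-thesis}), any $\mu\le\mu^{\ast}(2/3)$ yields $\Phi_{\mu}(5\pi/3)\ge 0$, completing the proof. I also need to verify sharpness of $\mu^{\ast}(2/3)$: for $\mu>\mu^{\ast}(2/3)$ the integral becomes negative, and by evaluating $T_n^{\mu}$ at the extremizing $\theta_n\to 0$ (matching the asymptotic scale) one obtains values that become negative for large $n$.

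In summary, the outline is: (i) boundary/maximum-principle reduction; (ii) trigonometric reformulation to $T_n^{\mu}(\theta)\ge 0$; (iii) Abel-summation/Riemann-sum comparison to $\Phi_{\mu}$; (iv) use of the defining equation of $\mu^{\ast}(2/3)$ and the monotonicity of $\Phi_{\mu}(5\pi/3)$ in $\mu$. The hardest ingredient is the uniform asymptotic comparison in step (iii), since the kernel $\cos((k+\tfrac16)\theta-\tfrac{\pi}{6})$ oscillates and the error terms in the Abel summation must be controlled across the full range of $\theta\in(0,2\pi)$ and all~$n$.
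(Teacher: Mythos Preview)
Your outline correctly identifies the reduction to the boundary sum (your $T_n^{\mu}(\theta)$ coincides with the paper's $\mathfrak{U}_n(\phi)$ via $\theta=2\phi$) and the relevance of the Koumandos integral representation, so the overall framework matches the paper. However, step~(iii) --- which you yourself flag as the main obstacle --- is left entirely open, and the paper's proof shows there is no soft way through it.

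In the paper the comparison with $\int_0^x t^{\mu-1}e^{it}\,dt$ is invoked only on the narrow window $\phi\in(\pi/9,\pi/5)$ and only for $n\ge 4$; even there it requires two dedicated propositions bounding $F(\phi)$ and $\kappa_n(\phi)$, the quantitative error estimates \eqref{eqn:bounds-ak-bk}--\eqref{eqn:bounds-infinite-sum-delta-k}, and the final numerical margin is barely $0.2$. Outside that window the integral comparison is too weak; for $\phi\in(0,\pi/9]\cup[\pi/5,\pi/2]$ the paper instead applies a summation-by-parts argument reducing the question to an explicit trigonometric polynomial $P(t)$ whose positivity is verified by Sturm sequences, and the cases $n=1,2,3$ are checked separately by further Sturm-sequence computations. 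You also do not exploit the symmetry $\mathfrak{U}_n(\pi-\phi)=\mathfrak{U}_n(\phi)$, which halves the range. In short, your ``two regimes'' are far too coarse: the actual proof needs a hand-crafted partition of $(0,\pi/2]$ and of $\{1,2,3\}\cup\{n\ge 4\}$, with a different technique on each piece, and your plan supplies none of those pieces.
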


%For $\rho\in(0,1/2)$, the following lemma obtained by Lamprecht
%\cite[Lemma 2.13]{lamprecht-thesis} will be used in Theorem
%\ref{thm:conj-1/3}.
%\begin{lemma}
%\label{lemma:integral-estimates-lam-thesis}
%\rm{\cite{lamprecht-thesis}}
%Let $\rho\in(0,1/2)$ and $\mu=\mu^{\ast}(\rho)$
%\end{lemma}
%
%
%

\begin{theorem}\label{thm:conj-1/3}
Conjecture \ref{conj:main} is true for all $\rho$ in an open neighbourhood of $\frac{1}{3}$.
\end{theorem}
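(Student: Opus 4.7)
The plan is to reduce the subordination in Conjecture \ref{conj:main} to a boundary trigonometric inequality and then establish this inequality by combining Gegenbauer polynomial identities with the analyticity of $\mu^*(\rho)$. Since both sides of \eqref{eqn:maximal-number} take the value $1$ at $z=0$, and the right-hand side $\bigl(\frac{1+z}{1-z}\bigr)^\rho$ is univalent onto the sector $\Omega_\rho = \{w : |\arg w| < \rho\pi/2\}$, the subordination is equivalent to the pointwise bound $\bigl|\arg\bigl((1-z)^\rho s_n^\mu(z)\bigr)\bigr| \leq \rho\pi/2$ on $\overline{\mathbb{D}} \setminus \{1\}$. By the maximum principle this reduces to the unit circle $z = e^{i\theta}$, and using $(1-e^{i\theta})^\rho = (2\sin(\theta/2))^\rho e^{i\rho(\theta-\pi)/2}$ recasts the claim as the non-negativity of a trigonometric sum of the form
$$T_n^{(\mu,\rho)}(\theta) \;=\; \sum_{k=0}^n c_k^{(\mu,\rho)} \sin\bigl((k+\alpha(\rho))\theta + \beta(\rho)\bigr), \qquad \theta \in (0, 2\pi),$$
whose coefficients are convolutions of the binomial coefficients from the expansion of $(1-z)^\rho$ with the ratios $(\mu)_k/k!$.

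The defining integral equation for $\mu^*(\rho)$ is exactly the $n \to \infty$ limit criterion for non-negativity of these sums. To lift this limit statement to a uniform assertion in $n$ for $\rho$ near $1/3$, I would adapt the Gegenbauer polynomial representation used by Koumandos and Lamprecht at $\rho = 1/4$, expressing $s_n^\mu$ in terms of ultraspherical polynomials $C_n^{(\lambda)}(\cos\theta)$ and seeking an integral representation for $T_n^{(\mu,\rho)}$ against a positivity-preserving kernel on $[0, (\rho+1)\pi]$. The key technical step is to verify that, for $\mu = \mu^*(\rho)$ and $\rho$ in a suitable range, the resulting kernel is non-negative, which will reduce the problem to the defining integral identity of $\mu^*$ together with an explicit remainder whose sign must be controlled by sharp estimates on sine partial sums.

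Once the positivity is established at $\rho = 1/3$, the extension to an open neighborhood proceeds via the real analyticity and strict monotonicity of $\mu^*$ from \cite{lamprecht-thesis}, together with uniform bounds on the coefficients $c_k^{(\mu,\rho)}$ for $\mu \leq \mu^*(\rho)$ over a compact $\rho$-interval. A standard continuity-compactness argument then propagates the positivity from $\rho = 1/3$ outward, the diameter of the neighborhood being governed by how much slack there is in the kernel non-negativity at the central value.

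The hard part will be constructing the positivity-preserving kernel for $T_n^{(\mu,\rho)}$ near $\rho = 1/3$. Unlike $\rho = 1/4$, where half-integer Gegenbauer parameters produce particularly clean identities, $\rho = 1/3$ does not collapse to a classically tabulated identity; the proof will likely require a new representation --- possibly mixing Gegenbauer functions of two adjacent orders or integrating against a weight adapted to the cubic-root branch data --- and it is the sign verification for this representation, uniformly in $n$, that constitutes the principal technical obstacle.
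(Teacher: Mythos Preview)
Your proposal is a plan rather than a proof, and the plan leaves the decisive step open. You reduce the subordination to a boundary trigonometric inequality (which is correct in spirit), but then you propose to seek an integral representation of the sum against a ``positivity-preserving kernel'' and candidly admit that you do not know what this kernel is for $\rho=1/3$, that it ``will likely require a new representation,'' and that the sign verification ``constitutes the principal technical obstacle.'' That is the whole problem; nothing in the proposal indicates how to overcome it. Moreover, your reduction is not in the cleanest form: by \cite[Lemma~2.9]{lamprecht-thesis} the conjecture is equivalent to
\[
\varsigma_n(\rho,\mu,\theta)\;:=\;\sum_{k=0}^{n}\frac{(\mu)_k}{k!}\sin\bigl[(2k+\rho)\theta\bigr]\;\geq 0,\qquad \theta\in(0,\pi],
\]
so the coefficients are simply $(\mu)_k/k!$, not convolutions with the binomial coefficients of $(1-z)^\rho$.

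The paper's proof does \emph{not} go through a positivity-preserving kernel or Gegenbauer identities. Instead it uses the explicit integral--error decomposition of $\sum_{k=0}^n \frac{(\mu)_k}{k!}e^{2ik\theta}$ encapsulated in Lemma~\ref{lemma:kappa-lemma} (the Koumandos--Lamprecht lemma), which bounds $2^{\nu}\theta^{\nu-1}\Gamma(\nu)\varsigma_n(\rho,\theta)$ from below by an expression involving $\kappa_n(\theta)=\frac{1}{\sin\theta}\int_0^{(2n+1)\theta}\sin(t+\rho\theta)t^{\nu-1}\,dt$ minus explicit error terms $X_n,Y_n,Z_n$ and a correction from the closed-form limit. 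The interval $(0,\pi]$ is split into five pieces (with the right half handled via the reflected sum $\ell_n(\rho,\theta)$), and on each piece one produces a numerical lower bound $L^{(j)}(\rho)$ that is continuous in $\rho$ and strictly positive at $\rho=1/3$; continuity then gives the open neighbourhood. Small cases $n=1,2,3$ are disposed of by Sturm-sequence arguments applied to explicit polynomials in $\cos^2(\theta/3)$. In short, the proof is a hands-on case analysis with computer-verified numerical margins, not an abstract kernel construction, and your proposal does not supply any of the concrete estimates that actually carry the argument.
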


The proofs of Theorem \ref{thm:conj-2/3} and Theorem \ref{thm:conj-1/3} are, respectively, given in Section
2 and Section 3. In establishing so, the method of sturm
sequence and MAPLE 12 software will be used. For more details on this method, we
refer to Kwong \cite{kwong-2014-sturm}.
Theorem \ref{thm:conj-1/3} leads to the following interesting results regarding
starlike functions. Clearly, the function
$zf_{\mu}=\frac{z}{(1-z)^{\mu}}\in\mathcal{S}^{\ast}(1-\mu/2)$. We
define a function $\phi_{\rho,\mu}:=z{}_2F_1(\rho,1;\mu;z)$ which satisfies,
\begin{align*}
\frac{z}{(1-z)^{\mu}}\ast\phi_{\rho,\mu}(z)=\frac{z}{(1-z)^{\rho}}.
\end{align*}
Here ${}_2F_1(a,b;c;z)$ is the
Gaussian hypergeometric function defined by the infinite series
$\displaystyle\sum_{k=0}^{\infty}\frac{(a)_k(b)_k}{(c)_k k!}z^k$, $z\in \mathbb{D}$.
Now for $\mu>0$ we define a class
$\mathcal{F}_{\mu}$ by,
\begin{align*}
\mathcal{F}_{\mu}:=\left\{g\in\mathcal{A}_0: \mathrm{Re}\left(\frac{zg'}{g}\right)>\frac{-\mu}{2}, z\in\mathbb{D}\right\}.
\end{align*}
Clearly, $g_{\mu}\in\mathcal{F}_{\mu}$ and $g\in\mathcal{F}_{\mu}\Leftrightarrow
zg\in\mathcal{S}^{\ast}(1-\mu/2)$.
Moreover, $g\in\mathcal{F}_{\mu}$ whenever $g\prec g_{\mu}$.
As $\mathcal{F}$ corresponds to the class $\mathcal{S}^{\ast}$, similarly,
we define class analogous to prestarlike functions by
$\mathcal{PF}_{\mu}$ as
\begin{align*}
\mathcal{PF}_{\mu}:=\left\{g\in\mathcal{A}_0: g\ast g_{\mu}\in\mathcal{F}_{\mu}\right\}.
\end{align*}
Similar type of results for prestarlike class also hold for the class
$\mathcal{PF}_{\mu}$.
A function $\tilde{g}_{\mu}\in\mathcal{A}_0$ can be defined such that
$g_{\mu}\ast\tilde{g}_{\mu}=\frac{1}{1-z}$.

\begin{theorem}\label{thm:3}
For $f\in \mathcal{S}^{\ast}(1-\mu/2)$ and $0<\mu\leq \mu^{\ast}(1/3)$, we have
\begin{align*}
\frac{s_n(f,z)}{\phi_{1/3,\mu}\ast f} \prec
\left(\frac{1+z}{1-z}\right)^{1/3},\quad
n\in \mathbb{N}.
\end{align*}
\end{theorem}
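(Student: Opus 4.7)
The plan is to derive Theorem~\ref{thm:3} from Theorem~\ref{thm:conj-1/3} in two stages: a direct verification for the extremal function of $\mathcal{S}^\ast(1-\mu/2)$, followed by a convolution-theoretic transfer to the entire class.

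\emph{Stage 1 (extremal case).} For the extremal function $f_\ast(z):=z/(1-z)^\mu$ of $\mathcal{S}^\ast(1-\mu/2)$, the defining convolution identity for $\phi_{1/3,\mu}$ immediately yields $\phi_{1/3,\mu}\ast f_\ast = z/(1-z)^{1/3}$, while an inspection of Taylor coefficients gives $s_n(f_\ast,z)=z\,s_{n-1}^\mu(z)$. Consequently
\[
\frac{s_n(f_\ast,z)}{\phi_{1/3,\mu}\ast f_\ast}=(1-z)^{1/3}\,s_{n-1}^\mu(z),
\]
which by Theorem~\ref{thm:conj-1/3} (Conjecture~\ref{conj:main} at $\rho=1/3$) is subordinate to $((1+z)/(1-z))^{1/3}$ throughout $0<\mu\leq\mu^\ast(1/3)$.

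\emph{Stage 2 (transfer to general $f$).} Rewriting Stage~1 in factorised form
\[
s_n(f_\ast,z)=\bigl(\phi_{1/3,\mu}\ast f_\ast\bigr)(z)\cdot F_n(z),\qquad F_n(z):=(1-z)^{1/3}s_{n-1}^\mu(z),
\]
we have $F_n(0)=1$ and $F_n\prec Q$, where $Q(z):=((1+z)/(1-z))^{1/3}$ maps $\mathbb{D}$ onto the convex sector $\{w:|\arg w|<\pi/6\}$. For an arbitrary $f\in\mathcal{S}^\ast(1-\mu/2)$, I would introduce the auxiliary kernel $h$ determined by $h\ast f_\ast = f$ (equivalently, $h(z)=\sum_{k\ge 1}\tfrac{(k-1)!}{(\mu)_{k-1}}a_k z^k$ when $f=\sum a_k z^k$). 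Hadamard-convolving the factorisation above by $h$, and using that Hadamard product commutes with the partial-sum operator together with $h\ast(\phi_{1/3,\mu}\ast f_\ast)=\phi_{1/3,\mu}\ast f$, the identity becomes
\[
\frac{s_n(f,z)}{\phi_{1/3,\mu}\ast f}=\frac{h\ast\bigl[(\phi_{1/3,\mu}\ast f_\ast)F_n\bigr]}{h\ast(\phi_{1/3,\mu}\ast f_\ast)}.
\]
Ruscheweyh's convolution theorem for prestarlike functions, applied with the starlike function $z/(1-z)^{1/3}\in\mathcal{S}^\ast(5/6)$ and the kernel $h$, together with the convexity of $Q(\mathbb{D})$, then yields subordination of the right-hand side to $Q$, completing the proof.

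\emph{Main obstacle.} The critical technical point is the verification of the prestarlike hypothesis for $h$ at the order dictated by $z/(1-z)^{1/3}\in\mathcal{S}^\ast(5/6)$. Via the identity $h\ast z/(1-z)^{1/3}=\phi_{1/3,\mu}\ast f$, this reduces to establishing the convolution-preservation statement that $\phi_{1/3,\mu}\ast f\in\mathcal{S}^\ast(5/6)$ for every $f\in\mathcal{S}^\ast(1-\mu/2)$ with $\mu\in(0,\mu^\ast(1/3)]$. This is a specific property of the hypergeometric operator associated with ${}_2F_1(1/3,1;\mu;\cdot)$ in which the arithmetic of $\rho=1/3$ enters essentially, and constitutes the technical heart of the lift from the extremal case to the whole class.
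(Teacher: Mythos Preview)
Your approach coincides with the paper's. Your kernel $h$ is exactly $z(\tilde g_\mu\ast g)$ in the paper's notation (where $g=f/z$ and $\tilde g_\mu$ is defined by $g_\mu\ast\tilde g_\mu=(1-z)^{-1}$), and the identity you display in Stage~2 is precisely the paper's
\[
\frac{s_n(g,z)}{\psi_{1/3,\mu}\ast g}=\frac{\bigl((1-z)^{1/3}s_n^{\mu}(z)\,g_{1/3}\bigr)\ast(\tilde g_\mu\ast g)}{g_{1/3}\ast(\tilde g_\mu\ast g)},
\]
resolved by the prestarlike convolution theorem. The paper simply omits your Stage~1 as a separate step, since the extremal case is already encoded in the factor $F_n=(1-z)^{1/3}s_n^\mu(z)$ appearing in the numerator.

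Where you diverge from the paper is in labelling the prestarlike hypothesis on $h$ a ``main obstacle'' requiring a hypergeometric-specific verification of the operator ${}_2F_1(1/3,1;\mu;\cdot)$. It is not a separate analytic fact: the identity $h\ast z(1-z)^{-\mu}=f\in\mathcal S^\ast(1-\mu/2)$ is \emph{by definition} the statement that $\tilde g_\mu\ast g\in\mathcal{PF}_\mu$ (equivalently, $h$ is prestarlike of order $1-\mu/2$), and the standard inclusion of prestarlike classes then matches this with the starlike factor $z(1-z)^{-1/3}\in\mathcal S^\ast(5/6)$. This is exactly what the paper's one-line citation to \cite{ruscheweyh-1982-book} is invoking, without further comment; no independent study of the hypergeometric kernel is needed.
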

\begin{proof}
For proving the theorem we need to show that
\begin{align*}
\frac{s_n(g,z)}{\psi_{1/3,\mu}\ast g} \prec \left(\frac{1+z}{1-z}\right)^{1/3},\quad
n\in \mathbb{N},
\end{align*}
where $\psi_{\rho,\mu}(z)=\frac{1}{z}\phi_{\rho,\mu}(z)$ and
$g(z)=\frac{1}{z}f(z)\in\mathcal{F}_{\mu}$.
By writing $\frac{s_n(g,z)}{\psi_{1/3,\mu}\ast g}$ as follows and using the convolution
theory for prestarlike functions \cite[p.36]{ruscheweyh-1982-book} we obtain
\begin{align*}
\frac{s_n(g,z)}{\psi_{1/3,\mu}\ast g}
=\frac{((1-z)^{1/3}s_n^{\mu}(z)g_{1/3})\ast(g\ast \tilde{g}_{\mu})}{g_{1/3}\ast(\tilde{g_{\mu}}\ast g)}
\prec \left(\frac{1+z}{1-z}\right)^{1/3},\quad z\in\mathbb{D}
\end{align*}
where the latter inequality holds for $0<\mu\leq\mu^{\ast}(1/3)$.
\end{proof}

Theorem \ref{thm:3} also settles the particular case $\rho=1/3$ of
\cite[Conjecture 3]{koumandos-ruscheweyh-2007-conjecture-JAT}.
Validity of Theorem \ref{thm:conj-1/3} leads to the proof of Theorem \ref{thm:4}.
\begin{theorem}\label{thm:4}
For $f\in \mathcal{S}^{\ast}(1-\mu/2)$ and $0<\mu\leq \mu^{\ast}(1/3)$, we have
\begin{align}\label{eqn:coro-1/z-sn}
\frac{1}{z} s_n(f,z)\prec \left(\frac{1+z}{1-z}\right)^{2/3}.
\end{align}
\end{theorem}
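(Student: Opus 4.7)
The plan is to deduce Theorem \ref{thm:4} from Theorem \ref{thm:3} by a product-of-subordinations argument based on the factorisation
\[
\frac{s_n(f,z)}{z}\;=\;\frac{s_n(f,z)}{\phi_{1/3,\mu}\ast f}\cdot\frac{\phi_{1/3,\mu}\ast f}{z}.
\]
Theorem \ref{thm:3} supplies subordination of the first factor to $((1+z)/(1-z))^{1/3}$, whose image is the open sector $S_{1/6}=\{w:|\arg w|<\pi/6\}$. The goal is to show that the second factor is likewise subordinate to $((1+z)/(1-z))^{1/3}$; granted this, both factors are analytic functions on $\mathbb{D}$ with value $1$ at the origin whose images lie in $S_{1/6}$, so by the triangle inequality for arguments the product has $|\arg|<\pi/3$ and value $1$ at $0$. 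Since $((1+z)/(1-z))^{2/3}$ is univalent on $\mathbb{D}$ (because $2/3<1$) and maps onto precisely the sector $S_{1/3}=\{w:|\arg w|<\pi/3\}$, we would conclude $s_n(f,z)/z\prec((1+z)/(1-z))^{2/3}$, which is the desired statement.

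To obtain the needed subordination for $(\phi_{1/3,\mu}\ast f)/z$, I would exploit the $n=1$ instance of Theorem \ref{thm:3}. Because $f\in\mathcal{S}^{\ast}(1-\mu/2)$ satisfies $f(0)=0$ and $f'(0)=1$, the first partial sum is $s_1(f,z)=z$, so Theorem \ref{thm:3} with $n=1$ reads
\[
\frac{z}{\phi_{1/3,\mu}\ast f}\;\prec\;\left(\frac{1+z}{1-z}\right)^{1/3},
\]
placing $z/(\phi_{1/3,\mu}\ast f)$ inside $S_{1/6}$. Reciprocation is legitimate because the values are bounded away from both $0$ and $\infty$, and it negates the argument; hence $(\phi_{1/3,\mu}\ast f)/z$ again takes values in $S_{1/6}$ with value $1$ at $z=0$. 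Univalence of $((1+z)/(1-z))^{1/3}$ onto $S_{1/6}$ then produces the required subordination.

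I do not anticipate a genuine obstacle. The argument is essentially formal once the $n=1$ observation is in place: it reduces to the geometric fact that, for $\alpha\in(0,1]$, the function $((1+z)/(1-z))^{\alpha}$ is univalent on $\mathbb{D}$ with image the open sector $|\arg w|<\alpha\pi/2$ (as the composition of the Cayley map with the principal branch of $w\mapsto w^{\alpha}$ on the right half-plane), combined with additivity of argument in the displayed factorisation of $s_n(f,z)/z$. The only delicate point worth recording is the nonvanishing of $\phi_{1/3,\mu}\ast f$ on $\mathbb{D}\setminus\{0\}$, which is itself a consequence of the $n=1$ case of Theorem \ref{thm:3}.
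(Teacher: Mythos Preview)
Your proof is correct and follows the same overall architecture as the paper's: factor $s_n(f,z)/z$ as $\dfrac{s_n(f,z)}{\phi_{1/3,\mu}\ast f}\cdot\dfrac{\phi_{1/3,\mu}\ast f}{z}$, invoke Theorem \ref{thm:3} for the first factor, show the second factor also lies in the sector $|\arg w|<\pi/6$, and then use additivity of arguments together with univalence of $((1+z)/(1-z))^{2/3}$.

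The genuine difference is in how you handle the second factor. The paper argues directly: it observes the elementary subordination $(1-z)^{1/3}\prec((1+z)/(1-z))^{1/3}$, rewrites this as the extremal case $\phi_{1/3,\mu}\ast f_{\mu}$, and then applies the convolution theory for prestarlike functions a second time to pass from the extremal $f_{\mu}$ to a general $f\in\mathcal{S}^{\ast}(1-\mu/2)$. You instead notice that the $n=1$ instance of Theorem \ref{thm:3} already delivers $z/(\phi_{1/3,\mu}\ast f)\prec((1+z)/(1-z))^{1/3}$, and then take reciprocals (which preserves the symmetric sector $S_{1/6}$). Your route is more economical once Theorem \ref{thm:3} is in hand, since it avoids re-invoking the prestarlike machinery and extracts everything from a single black box; the paper's route, on the other hand, makes the role of the extremal function $f_{\mu}$ and the transfer principle explicit, which may be conceptually clearer to a reader tracking where the starlikeness hypothesis is actually used.
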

\begin{proof}
Since $(1-z)^{1/3} \prec \left(\dfrac{1+z}{1-z}\right)^{1/3}$ holds.
For $f_{\mu}\in\mathcal{S}^{\ast}(1-\mu/2)$, this can be written as
\begin{align*}
\phi_{1/3,\mu}(z) \ast f_{\mu}(z) \prec \left(\frac{1+z}{1-z}\right)^{1/3},\qquad z\in\mathbb{D}.
\end{align*}
Using convolution theory of prestarlike functions for $f\in\mathcal{S}^{\ast}(1-\mu/2)$, we get
\begin{align*}
\phi_{1/3,\mu}(z) \ast f(z) \prec \left(\frac{1+z}{1-z}\right)^{1/3},\qquad z\in\mathbb{D}.
\end{align*}
Hence Theorem \ref{thm:3} concludes that
\[
\pushQED{\qed}
\frac{1}{z} s_n(f,z)=\frac{s_n(f,z)/z}{\phi_{1/3,\mu}(z) \ast f(z) }
\cdot \phi_{1/3,\mu}(z) \ast f(z) \prec \left(\frac{1+z}{1-z}\right)^{2/3}.\qedhere
%\popQED
\]
\end{proof}

For another interpretation of Theorem \ref{thm:4}, the following definition
of Kaplan class is of
considerable interest.
\begin{definition}[Kaplan Class]
\rm{\cite{sheilsmall-hadamard-JAnlasM-1978}}
For $\alpha\geq0$, $\beta\geq0$, $f\in\mathcal{K}(\alpha, \beta)$ if
$f(z)$ can be written as
\begin{align*}
f(z)=H(z)k(z)
\end{align*}
where $H\in\mathcal{A}$ satisfies $|\arg H(z)|\leq \frac{\pi}{2}\min(\alpha,
\beta)$ and $k(z)\in \prod_{\alpha-\beta}$ in $z\in\mathbb{D}$.
\end{definition}
The class $\prod_{\alpha}$ is defined as follows.
For real $\alpha$, a function $k(z)\in\mathcal{A}$ is in
$\prod_{\alpha}$ where $k(z)\neq0$ in $\mathbb{D}$ and satisfies
\begin{align*}
  \mathrm{Re} \frac{zk'(z)}{k(z)}
  \left\{
    \begin{array}{ll}
      <\alpha/2, & \hbox{if $\alpha>0$;} \\
      >\alpha/2, & \hbox{if $\alpha<0$;} \\
      \equiv 0, & \hbox{$\alpha=0$.}
    \end{array}
  \right.
\end{align*}
In terms of Kaplan classes $\mathcal{K}(\alpha,\beta)$
\cite[p.32]{ruscheweyh-1982-book},
\eqref{eqn:coro-1/z-sn} can be replaced by a stronger statement
\begin{align*}
\frac{1}{z} s_n(f,z) \in \mathcal{K}(1/3,2/3), \qquad n\in\mathbb{N}.
\end{align*}
\begin{remark}\label{remark:1}
Note that \eqref{eqn:coro-1/z-sn} is equivalent to Re$(s_n(f,z)/z)^{3/2}>0$
for $z\in\mathbb{D}$. It is obvious that
$z G_{\lambda}(z,x)\in\mathcal{S}^{\ast}(1-\lambda)$ where $G_{\lambda}(z,x)$
is the generating function for the Gegenbauer polynomials defined by,
\begin{align*}
G_{\lambda}(z,x):=\frac{1}{(1-2xz+z^2)^{\lambda}}
=\sum_{k=0}^{\infty}C_{k}^{\lambda}(x)z^k,
\qquad x\in[-1,1].
\end{align*}
Therefore \eqref{eqn:coro-1/z-sn} implies the following inequality holds for
all $x\in(-1,1)$ and $0<\lambda \leq \frac{1}{2}\mu^{\ast}(1/3)=0.2483\ldots$.
\begin{align*}
\left|\arg\sum_{k=0}^n C_k^{\lambda}(x)z^k\right|<\frac{\pi}{3},\qquad z\in\mathbb{D}.
\end{align*}
\end{remark}
%This relation can be viewed graphically as follows. For $n=4$ and
%$\lambda=\frac{1}{2}\mu^{\ast}(1/3)$ and $x=.99$ we have the following picture.
%%
%\begin{center}
%\includegraphics[width=5cm, height=6cm]{gegenbauer.pdf}
%\end{center}
%\section{Applications in Gegenbauer polynomial sums}

\begin{remark}
Remark \ref{remark:1} implies that
\begin{align*}
\sum_{k=0}^n C_{k}^{\lambda}(x)z^k \neq 0\quad  \mbox{on} \quad z\in\mathbb{D}
\quad \mbox{and}\quad  0<\lambda \leq \frac{1}{2}\mu^{\ast}(1/3), \quad  x\in[-1,1].
\end{align*}
whereas from \cite[Theorem 1]{koumandos-chennai-conference-proceeding-2000}
we have
\begin{align*}
\sum_{k=0}^n C_{k}^{\lambda}(x)z^k = 0, \quad |z|\leq 1, \,\, 0<\lambda\leq 1/2.
\end{align*}
So we can conclude that
\begin{align}
\sum_{k=0}^n C_{k}^{\lambda}(x)z^k = 0 \quad \hbox{on}\quad z\in\partial
\mathbb{D}\quad \mbox{for}\quad x\in[-1,1].
\end{align}
Further by substituting $x=\cos\theta$ and using the relation between
Gegenbauer polynomials and Jacobi polynomials,
\begin{align*}
C_k^{\lambda}(x)=\frac{(2\lambda+1)_k}{k!}
\frac{P_k^{(\lambda,\lambda)}(x)}{P_k^{(\lambda,\lambda)}(1)},
\end{align*}
we obtain
\begin{align*}
\sum_{k=0}^n\frac{(2\lambda+1)_k}{k!}
\frac{P_k^{(\lambda,\lambda)}(\cos\theta)}{P_k^{(\lambda,\lambda)}(1)}\neq0,
\quad \mbox{on } z\in\mathbb{D}, \quad -\pi<\theta<\pi,
\end{align*}
for $-1/2<\lambda \leq \frac{1}{2}(\mu^{\ast}(1/3)-1)$.
\end{remark}

For any convex function $f\in\mathcal{C}$, Wilf Theorem \cite[Theorem 8.9]{duren-1983-book}
(known as Wilf Conjecture) leads us
\begin{align}
f\ast (1-z)^{1/3}s_n^{\mu}(z) \prec f\ast \left(\frac{1+z}{1-z}\right)^{1/3},\quad
0<\mu\leq \mu^{\ast}(1/3).
\end{align}

\begin{lemma}
\label{lemma:rus-sheil}
{\rm{\cite[Lemma 2.7]{ruscheweyh-sheilsmall-hadamard-product-1973}}}
Let $\varphi(z)$ be convex and $g(z)$ is starlike in $\mathbb{D}$. Then for each function
$F(z)$ analytic in $\mathbb{D}$ and satisfying $\mathrm{Re}F(z)>0$, we have
\begin{align*}
\mathrm{Re}\frac{(\varphi \ast F g)(z)}{(\varphi\ast g)(z)}>0 , \quad z\in\mathbb{D}.
\end{align*}
\end{lemma}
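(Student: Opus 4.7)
The plan is to reduce the inequality to an extremal case via the Herglotz representation, and then to invoke the convolution theory of convex and starlike functions developed by Ruscheweyh and Sheil--Small.

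First, since $\mathrm{Re}\,F>0$ in $\mathbb{D}$, after normalizing so that $F(0)=1$, the Herglotz representation theorem produces a probability measure $\mu$ on the unit circle with
\begin{align*}
F(z)=\int_{|x|=1}\frac{1+xz}{1-xz}\,d\mu(x).
\end{align*}
Convolution is linear in the second argument, and integration against a positive measure preserves positivity of the real part. Hence it suffices to prove the lemma in the extremal case $F(z)=F_x(z):=(1+xz)/(1-xz)$ for each fixed $x$ with $|x|=1$.

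Next, I would use the partial-fraction decomposition $(1+xz)/(1-xz)=-1+2/(1-xz)$ to rewrite
\begin{align*}
\frac{(\varphi \ast F_x g)(z)}{(\varphi \ast g)(z)}=-1+2\cdot\frac{\bigl(\varphi \ast [g(\zeta)/(1-x\zeta)]\bigr)(z)}{(\varphi \ast g)(z)},
\end{align*}
so that the lemma reduces to the sharper quantitative bound
\begin{align*}
\mathrm{Re}\,\frac{\bigl(\varphi \ast [g(\zeta)/(1-x\zeta)]\bigr)(z)}{(\varphi \ast g)(z)}>\frac{1}{2},\qquad |x|=1,\ z\in\mathbb{D}.
\end{align*}

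The main obstacle is proving this last inequality, which encodes the full interaction between the convex $\varphi$ and the starlike $g$. My plan is to apply a duality/extremal argument: for fixed $z_0\in\mathbb{D}$, the assignment $h\mapsto(\varphi \ast hg)(z_0)/(\varphi \ast g)(z_0)$ is a continuous linear functional in $h$, so by a second application of Herglotz (to $h-\tfrac12$) it is enough to verify the bound on the extremal family $h(\zeta)=1/(1-x\zeta)$ with $|x|\le 1$. For these kernels, one combines the integral representation of the convex $\varphi$ as an average of the half-plane maps $z/(1-yz)$ with the starlikeness characterization $\mathrm{Re}(zg'/g)>0$; this collapses the estimate to a concrete positivity assertion that can be verified by a Schwarz--Pick type argument, exploiting the fact that starlike $g$ does not vanish on $\mathbb{D}\setminus\{0\}$ and that convex averaging does not enlarge the argument. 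Integrating back against the Herglotz measure $\mu$ of the original $F$ finally yields $\mathrm{Re}((\varphi \ast Fg)/(\varphi \ast g))>0$.
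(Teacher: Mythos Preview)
The paper does not supply a proof of this lemma; it is quoted from \cite{ruscheweyh-sheilsmall-hadamard-product-1973} and used as a black box. So there is no in-paper argument to compare against, but your outline can be checked against the original Ruscheweyh--Sheil-Small proof.

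Your first two steps---the Herglotz reduction to the extremal kernels $F_x(z)=(1+xz)/(1-xz)$ and the partial-fraction rewriting---are correct and are exactly the opening moves of the RSS argument. The substantive gap is in Step~3. Your claimed ``integral representation of the convex $\varphi$ as an average of the half-plane maps $z/(1-yz)$'' is not a characterization of convexity: a representation $\varphi(z)=\int z/(1-yz)\,d\mu(y)$ encodes only the Marx--Strohh\"acker consequence $\mathrm{Re}(\varphi(z)/z)>\tfrac12$, which is strictly weaker. The lemma is genuinely false for that larger class. For instance, $\varphi(z)=z+\tfrac{2}{5}z^2$ satisfies $\mathrm{Re}(\varphi/z)>\tfrac12$ but is not convex, and with the Koebe function $g(z)=z/(1-z)^2$ and $F(z)=(1+z)/(1-z)$ one computes
\[
\frac{(\varphi\ast Fg)(z)}{(\varphi\ast g)(z)}=\frac{1+\tfrac{8}{5}z}{1+\tfrac{4}{5}z},
\]
whose real part is negative near $z=-1$. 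Hence no argument that feeds only this linear representation of $\varphi$ into the problem can succeed. Even formally, the quotient one obtains,
\[
\frac{\int F(yz)\,g(yz)\,y^{-1}\,d\mu(y)}{\int g(yz)\,y^{-1}\,d\mu(y)},
\]
is not a convex combination of the values $F(yz)$ because the complex weights $g(yz)/y$ need not lie in a common half-plane; so ``convex averaging does not enlarge the argument'' is inapplicable, and the unspecified ``Schwarz--Pick type argument'' does not close the gap. The actual RSS mechanism is different: it rests on their non-vanishing characterization of convex (and, dually, starlike) functions in terms of Hadamard products with the kernels $z/((1-xz)(1-yz))$, and it is precisely this structural input---not any linear averaging---that handles the extremal case.
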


\begin{remark}
For any convex function $\varphi (z)$ the following holds.
\begin{align*}
  \left|\arg\left(\frac{\varphi\ast zs_n^{\mu}(z)}{\varphi\ast z(1-z)^{-1/3}}\right) \right|<\frac{\pi}{6},
  \qquad 0<\mu \leq \mu^{\ast}(1/3).
\end{align*}
\end{remark}

Since $\frac{1}{(1-z)^{1/3}}$ is the derivative of a convex function say $\varphi'$.
By writing $\dfrac{\varphi\ast zs_n^{\mu}(z)}{\varphi\ast z(1-z)^{-1/3}}$ as
\begin{align*}
\frac{\varphi\ast zs_n^{\mu}(z)}{\varphi\ast z(1-z)^{-1/3}}
=\frac{\varphi\ast \frac{z}{(1-z)^{1/3}}(1-z)^{1/3}s_n^{\mu}(z)}{\varphi\ast z(1-z)^{-1/3}}
\end{align*}
which using Lemma \ref{lemma:rus-sheil} and Theorem \ref{thm:conj-1/3} justifies the remark.

Since both the conjectures hold good for the harmonic mean of $1/2$ \& $1/4$
and $1/2$ \& $1$, affirmative solution to the following conjecture will settle 
both these conjectures.
\begin{conj}
If Conjecture \ref{conj:main} and Conjecture \ref{conj:main-weaker} hold for
$0<\rho_1, \rho_2\leq 1$. Then both conjectures also hold for harmonic mean of $\rho_1$
and $\rho_2$ which is $\dfrac{2\rho_1 \rho_2}{\rho_1+\rho_2}$.
\end{conj}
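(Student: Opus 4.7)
The plan is to exploit the algebraic identity of the harmonic mean together with the fact that the relevant subordination is homogeneous under fractional powers of nonvanishing functions. Write $\rho:=2\rho_1\rho_2/(\rho_1+\rho_2)$ and introduce the weights $w_1:=\rho_2/(\rho_1+\rho_2)$ and $w_2:=\rho_1/(\rho_1+\rho_2)$, which satisfy $w_1+w_2=1$ and $\rho_1 w_1+\rho_2 w_2=\rho$; with the \emph{same} weights one also checks the companion identity $(2\rho_1-1)w_1+(2\rho_2-1)w_2=2\rho-1$. Setting $\mu_i:=\mu^{\ast}(\rho_i)$, Conjecture~\ref{conj:main} at $\rho_i$ is equivalent, via univalence of $((1+z)/(1-z))^{\rho_i}$ onto the open sector of half-angle $\rho_i\pi/2$, to
\[
\bigl|\arg\bigl((1-z)^{\rho_i}s_n^{\mu_i}(z)\bigr)\bigr|<\rho_i\pi/2,
\]
and Conjecture~\ref{conj:main-weaker} is the corresponding bound with exponent $2\rho_i-1$ and right-hand side $\pi/2$. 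Theorem~\ref{thm:A} applied to $zf_{\mu_i}\in\mathcal{S}^{\ast}(1-\mu_i/2)$ guarantees $s_n^{\mu_i}\neq 0$ on $\mathbb{D}$, so fractional powers are unambiguously defined by the principal branch through $s_n^{\mu_i}(0)=1$.

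The natural first step is to form the weighted geometric mean
\[
F(z):=\bigl[(1-z)^{\rho_1}s_n^{\mu_1}(z)\bigr]^{w_1}\bigl[(1-z)^{\rho_2}s_n^{\mu_2}(z)\bigr]^{w_2}=(1-z)^{\rho}\bigl[s_n^{\mu_1}(z)\bigr]^{w_1}\bigl[s_n^{\mu_2}(z)\bigr]^{w_2}.
\]
The triangle inequality for arguments then yields $|\arg F(z)|<\rho_1 w_1\pi/2+\rho_2 w_2\pi/2=\rho\pi/2$ and $F(0)=1$, so $F\prec((1+z)/(1-z))^{\rho}$. An identical construction with $2\rho_i-1$ in place of $\rho_i$ delivers the corresponding Conjecture~\ref{conj:main-weaker} statement at $\rho$. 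Thus the harmonic-mean identity is automatically compatible with both conjectures at the level of a suitable weighted geometric mean of partial sums.

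The main obstacle, and the reason the statement stays conjectural, is to pass from $F(z)$ to the genuine target $(1-z)^{\rho}s_n^{\mu^{\ast}(\rho)}(z)$, i.e.\ to replace the product $[s_n^{\mu_1}]^{w_1}[s_n^{\mu_2}]^{w_2}$ by a \emph{single} partial sum at the sharp parameter $\mu^{\ast}(\rho)$. I would attack this through the prestarlike/convolution machinery used in the proof of Theorem~\ref{thm:3}: seek a Hadamard factorization of the form $s_n^{\mu^{\ast}(\rho)}(z)=s_n^{\mu_1}(z)\ast h_n(z)$ with $h_n$ in a prestarlike class dictated by $\rho_2,\mu_2$, and then transport the sector bound across the convolution using Lemma~\ref{lemma:rus-sheil}. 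Producing such an identity that preserves the extremal constant would rest on decomposing the defining integral $\int_0^{(\rho+1)\pi}\sin(t-\rho\pi)t^{\mu-1}dt=0$ in the harmonic-mean parameter, and on Lamprecht's analyticity and strict monotonicity of $\rho\mapsto\mu^{\ast}(\rho)$. Establishing this decomposition sharply, together with the reverse direction that certifies the maximality of $\mu^{\ast}(\rho)$ (the only part not automatic from the weighted-mean construction), is the critical technical bottleneck.
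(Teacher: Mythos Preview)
The statement you are attempting to prove is labelled as a \emph{conjecture} in the paper and is left open there: the authors state it as motivation after verifying Conjectures~\ref{conj:main} and~\ref{conj:main-weaker} at the harmonic means $1/3$ and $2/3$, but they give no proof or proof sketch. So there is no ``paper's own proof'' to compare against.

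Your partial construction is correct as far as it goes: with the weights $w_i$ you write down, the function $F(z)=(1-z)^{\rho}[s_n^{\mu_1}]^{w_1}[s_n^{\mu_2}]^{w_2}$ does satisfy $|\arg F|<\rho\pi/2$. But, as you yourself note, this does not touch the conjecture, because $[s_n^{\mu_1}]^{w_1}[s_n^{\mu_2}]^{w_2}$ is not a partial sum $s_n^{\mu}$ for any $\mu$, and in particular not for $\mu=\mu^{\ast}(\rho)$. The conjecture is a statement about the specific polynomials $s_n^{\mu^{\ast}(\rho)}$ and about the \emph{maximality} of $\mu^{\ast}(\rho)$; your weighted geometric mean produces neither. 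The proposed remedy via a Hadamard factorization $s_n^{\mu^{\ast}(\rho)}=s_n^{\mu_1}\ast h_n$ with $h_n$ in a suitable prestarlike class is purely speculative: no such identity is known, Lemma~\ref{lemma:rus-sheil} transports positivity of the real part rather than sector bounds of arbitrary aperture, and there is no mechanism offered that links the integral equation defining $\mu^{\ast}$ at $\rho$ to those at $\rho_1,\rho_2$ through the harmonic mean. In short, the genuine obstacle you name is the entire content of the conjecture, and your proposal does not reduce it.
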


\section{Proof of Theorem \ref{thm:conj-2/3}}
Conjecture \ref{conj:main} and Conjecture \ref{conj:main-weaker} are consequences of
the positivity of trigonometric sums having
coefficients of type $\frac{(\mu)_k}{k!}$. To prove the positivity of such
trigonometric sums, we follow the procedure
given by Koumandos \cite{koumandos-2007-ext-viet-ramanujan}.
In this method, we estimate the sequence
$\left\{\frac{(\mu)_k}{k!}\right\}$ in terms of the limiting sequence
$k^{\mu-1}$. For $\mu\in(0,1)$, we consider $\Delta_k$ as
\begin{align*}
\Delta_k:=\frac{k^{\mu-1}}{\Gamma(\mu)}-\frac{(\mu)_k}{k!}, \qquad \mbox{for}
\quad k\in\mathbb{N}.
\end{align*}
Now from \cite[eq.3.8]{koumandos-ruscheweyh-2007-conjecture-JAT},
$\displaystyle\sum_{k=0}^n\frac{(\mu)_k}{k!}e^{2ik\theta}$ can be written as
\begin{align}\label{eqn:trig-sum-in-delta}
\begin{split}
\sum_{k=0}^n\frac{(\mu)_k}{k!}e^{2ik\theta}
&=\sum_{k=0}^{\infty}\frac{(\mu)_k}{k!}e^{2ik\theta}
-\frac{\theta}{\sin\theta}\frac{e^{i\mu\pi/2}}{(2\theta)^{\mu}}
+\frac{1}{\Gamma(\mu)}\frac{\theta}{\sin\theta}\frac{1}{(2\theta)^{\mu}}
\int_0^{(2n+1)\theta}\frac{e^{it}}{t^{1-\mu}}\\
&\quad \quad-\frac{1}{\Gamma(\mu)}\frac{\theta}{\sin\theta}
\sum_{k=n+1}^{\infty}\{\mathfrak{A}_k(\theta)+\mathfrak{B}_k(\theta)\}
+\sum_{k=n+1}^{\infty}\Delta_k e^{2ik\theta},
\end{split}
\end{align}
where
\begin{align*}
\mathfrak{A}_k(\theta)&:=\int_0^{1/2}\int_0^t\left\{\frac{1-\mu}{(k+s)^{2-\mu}}-
\frac{1-\mu}{(k-t+s)^{2-\mu}}\right\}ds\, e^{2i\theta(k-t)}dt \quad \hbox{and }\\
\mathfrak{B}_k(\theta)&:=\int_0^{1/2} 2i\sin 2\theta t
\int_0^t \frac{1-\mu}{(k+s)^{2-\mu}}ds \, e^{2ik\theta} dt.
\end{align*}
We have to estimate the terms in the right hand side of
\eqref{eqn:trig-sum-in-delta}.
%But for the validity of Conjecture \ref{conj:main}
%for all $\rho$ in neighbourhood of $1/3$, extra manipulation in
%\eqref{eqn:trig-sum-in-delta} is required.
We use the following estimates of
$\mathfrak{A}_k(\theta)$ and $\mathfrak{B}_k(\theta)$ given in 
\cite[Lemma 1]{koumandos-ruscheweyh-2006-gegenbauer-Const.Approx}:
\begin{align}\label{eqn:bounds-ak-bk}
\left|\sum_{k=n+1}^{\infty}\mathfrak{A}_k(\theta)\right|< \frac{1-\mu}{8}n^{\mu-2}
\quad \mbox{and} \quad
\left|\sum_{k=n+1}^{\infty}\mathfrak{B}_k(\theta)\right|<
\frac{\theta}{\sin\theta}\frac{1-\mu}{6}n^{\mu-2},
\end{align}
for $\theta\in\mathbb{R}$. Further, using the completely monotonicity of
$x-\frac{\Gamma(x+\mu)}{\Gamma(x+1)}x^{2-\mu}$, the following estimate was obtained in
\cite[eqn.11]{koumandos-lamprecht-2010-conjecture-JAT}.
\begin{align}\label{eqn:bounds-infinite-sum-delta-k}
\left|\sum_{k=n+1}^{\infty}\Delta_k e^{2ik\theta} \right|
\leq \frac{\mu(1-\mu)}{2\sin a} \frac{1}{\Gamma(\mu)}\frac{1}{(n+1)^{2-\mu}},
\end{align}
for $0<a<\theta<\pi/2$, $n\in\mathbb{N}$ and $1/3 \leq \mu <1$.
Now we are ready to give the proof of Theorem \ref{thm:conj-2/3}.

\subsection{Proof of Theorem \ref{thm:conj-2/3}}
For $\rho=2/3$, \eqref{eqn:conj-weaker-form} reduces to
$\mathrm{Re}((1-z)^{1/3}s_n^{\mu}(z))>0$ which further equivalent to
\begin{align}\label{eqn:U_n}
\mathfrak{U}_n(\phi):=\sum_{k=0}^n d_k \cos\left[\left(2k+\frac{1}{3}\right)\phi
-\frac{\pi}{6}\right]>0 \quad \mbox{for} \quad 0<\phi<\pi.
\end{align}
where $d_k=\frac{(\mu)_k}{k!}$, $k=0,1,\ldots, n$. To prove the theorem it is
equivalent to establish the existence of \eqref{eqn:U_n}. A numerical evaluation yields that
\begin{align*}
\mu^{\ast}(2/3)=0.8468555683\ldots.
\end{align*}
Using summation by parts, we observe that this particular case
need to be established only for $\mu=\mu^{\ast}(2/3)=0.8468555683\ldots$.
Since $\mathfrak{U}_n(\pi-\phi)=\mathfrak{U}_n(\phi)$, it is sufficient
to prove $\mathfrak{U}_n(\phi)>0$ for $0<\phi\leq\pi/2$.

\textbf{For $n=1$:}
\newline
Simple calculation gives
\begin{align*}
\mathfrak{U}_1(\phi)=(1-d_1)\sin\left(\frac{\phi}{3}+\frac{\pi}{3}\right)
+ 2d_1\sin\left(\frac{4\phi}{3}+\frac{\pi}{3}\right)\cos \phi,
\end{align*}
which is clearly positive for all $0<\phi\leq\pi/2$.
For $n\geq 2$, the proof is divided into several cases.

\textbf{The case $0<\phi\leq \dfrac{\pi}{9}$ and $\dfrac{\pi}{5}\leq \phi\leq  \dfrac{\pi}{2}$ for $n\geq2$:}
\newline
For $n\geq 2$,
\begin{align*}
2\sin\phi \mathfrak{U}_n(\phi)
=\sum_{k=0}^n d_k \left[\sin\left(\left(2k+\frac{4}{3}\right)\phi-\frac{\pi}{6}\right)
- \sin\left(\left(2k-\frac{2}{3}\right)\phi-\frac{\pi}{6}\right) \right],
\end{align*}
which using summation by parts and simple trigonometric identities yield that
\begin{align*}
&=\sum_{k=0}^{n-1}(d_k-d_{k+1})\left[ \sin \left(\frac{2\phi}{3}+\frac{\pi}{3}\right)
+ \sin \left(\left(2k+\frac{4}{3}\right)\phi-\frac{\pi}{6}\right) \right]\\
& \qquad +d_n \left[ \sin \left(\frac{2\phi}{3}+\frac{\pi}{3}\right)
+ \sin \left(\left(2n+\frac{4}{3}\right)\phi-\frac{\pi}{6}\right) \right]\\
&\geq (1-d_1)\left[\sin\left(\frac{2\phi}{3}+\frac{\pi}{6}\right)
+\sin \left(\frac{4\phi}{3}-\frac{\pi}{6}\right)\right]
+(d_1-d_2)\\
&\qquad\left[\sin\left(\frac{2\phi}{3}+\frac{\pi}{6}\right)
+\sin \left(\frac{10\phi}{3}-\frac{\pi}{6}\right)\right]+d_2 \left[-1+\sin \left(\frac{2\phi}{3}+\frac{\pi}{6}\right)\right]\\
&=-d_2+\cos\left(\frac{2\phi}{3}-\frac{\pi}{3}\right)+(1-d_1)\cos\left(\frac{4\phi}{3}-\frac{2\pi}{3}\right)
+(d_2-d_1)\cos\left(\frac{10 \phi}{3}-\frac{5\pi}{3}\right).
\end{align*}
Substituting $\dfrac{2\phi-\pi}{3}=t$, then
\begin{align*}
2\sin\phi \mathfrak{U}_n(\phi)&\geq -d_2+\cos t+ (1-d_1)\cos 2t+(d_2-d_1)\cos 5t\\
&=-\frac{\mu(\mu+1)}{2}+\cos t+ (1-\mu)\cos 2t+\frac{\mu(\mu-1)}{2} \cos 5t=:P(t).
\end{align*}
Using the method of sturm sequence, we get that $P(t)$ has no roots in
$(-\frac{\pi}{3},-\frac{7\pi}{27}]$ and $[-\frac{\pi}{5},0]$.
It can be directly verified that $P(0)>0$ and $P(-\pi/3)>0$.
Hence for $n\geq 2$,
\begin{align*}
\mathfrak{U}_n(\phi)>0 \quad \mbox{for} \quad 0<\phi\leq \pi/9 \quad
\mbox{and}\quad  \pi/5 \leq \phi \leq\pi/2.
 \end{align*}

\textbf{The case $\dfrac{\pi}{9}<\phi<\dfrac{\pi}{5}$:}
\newline
We prove this case for $n=2$, $3$ and $n\geq 4$. For $n=2$, $3$ instead of
$\frac{\pi}{9}<\phi < \frac{\pi}{5}$, we establish for the larger range 
$\phi \in (0,\pi/2]$.

\textbf{The case $n=2$, $3$:}
\newline
This can be verified again with sturm sequence. Using simple calculations,
\begin{align*}
\mathfrak{U}_2(\phi)&=\sin t+\mu \sin 7t+ \frac{\mu(\mu+1)}{2} \sin 13t=:Q(t)\quad \mbox{and}\\
\mathfrak{U}_3(\phi)&=\sin t+\mu \sin 7t+ \frac{\mu(\mu+1)}{2} \sin 13t+\frac{\mu(\mu+1)(\mu+2)}{6}\sin 19t=:R(t),
\end{align*}
where $t=\frac{\phi+\pi}{3}$. It can be verified that $Q(t)$ and $R(t)$
have no zeros in $(\frac{\pi}{3},\frac{\pi}{2}]$ and $Q(\pi/2)>0$,
$R(\pi/2)>0$. Hence it implies that $\mathfrak{U}_2(\phi)>0$
and $\mathfrak{U}_3(\phi)>0$ in $\phi\in(0,\pi/2]$.

\textbf{The case $\dfrac{\pi}{9}<\phi<\dfrac{\pi}{5}$ for $n\geq4$:}
\newline
For this case, we use the representation of $\mathfrak{U}_n(\phi)$ as
\begin{align*}
\mathfrak{U}_n(\phi)=
\mathrm{Re}\left(e^{i\left(\frac{\phi}{3}-\frac{\pi}{6}\right)}\sum_{k=0}^n
d_k e^{2ik\phi}\right)
\end{align*}
and the following propositions are used.
\begin{prop}\label{prop:F-phi}
Let
%
%\begin{align*}
$F(\phi):=\displaystyle\sum_{k=0}^{\infty} d_k e^{2ik\phi}-\frac{\phi}{\sin\phi}\frac{e^{i\mu\frac{\pi}{2}}}{(2\phi)^{\mu}}$.
%\end{align*}
%
%and
%%
%\begin{align*}
%\wedge(\phi):=\frac{1}{\sin\phi}\left[1-\left(\frac{\sin \phi}{\phi}\right)^{1-\mu}\right]
%\end{align*}
%
Then for $0<\phi\leq\frac{\pi}{5}$, we have
\begin{align*}
\frac{2^{\mu}}{\phi^{1-\mu}}\mathrm{Re} \left\{F(\phi)e^{i\left(\frac{\phi}{3}-\frac{\pi}{6}\right)}\right\}
\geq \mu \cos\left(\frac{2\pi}{3}-\frac{\mu\pi}{2}\right)-\wedge\left(\frac{\pi}{5}\right),
\end{align*}
where $\wedge(\phi):=\dfrac{1}{\sin \phi}\left(1-\left(\frac{\sin \phi}{\phi}\right)^{1-\mu}\right)$.
\end{prop}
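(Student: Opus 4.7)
The plan is to replace the infinite series by its closed form and then isolate the correction already captured by $\wedge$.

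I would begin by using the binomial identity $\sum_{k=0}^{\infty} d_k z^k = (1-z)^{-\mu}$ with $z = e^{2i\phi}$. The polar factorisation $1 - e^{2i\phi} = 2\sin\phi\,e^{i(\phi - \pi/2)}$, valid for $\phi \in (0,\pi)$, yields
\[
\sum_{k=0}^{\infty} d_k e^{2ik\phi} = (2\sin\phi)^{-\mu} e^{i\mu(\pi/2 - \phi)}.
\]
Substituting this into the definition of $F$ and factoring out $e^{i\mu\pi/2}/(2\phi)^{\mu}$ gives
\[
F(\phi) = \frac{e^{i\mu\pi/2}}{(2\phi)^{\mu}} \left[\left(\frac{\phi}{\sin\phi}\right)^{\mu} e^{-i\mu\phi} - \frac{\phi}{\sin\phi}\right].
\]

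Next I would multiply by $2^{\mu}\phi^{\mu-1}$ and $e^{i(\phi/3 - \pi/6)}$, and take the real part. Setting $\beta := \mu\pi/2 + \phi/3 - \pi/6$, a short computation yields
\[
\frac{2^{\mu}}{\phi^{1-\mu}}\mathrm{Re}\{F(\phi) e^{i(\phi/3 - \pi/6)}\} = \frac{1}{\sin\phi}\left[\left(\frac{\sin\phi}{\phi}\right)^{1-\mu}\cos(\beta - \mu\phi) - \cos\beta\right].
\]
Adding and subtracting $\cos(\beta - \mu\phi)/\sin\phi$ produces the decomposition
\[
\frac{\cos(\beta - \mu\phi) - \cos\beta}{\sin\phi} - \wedge(\phi) \cos(\beta - \mu\phi).
\]
By the sum-to-product identity the first summand equals $2\sin(\beta - \mu\phi/2)\sin(\mu\phi/2)/\sin\phi$, whose limit at $\phi = 0$ is $\mu\sin(\mu\pi/2 - \pi/6) = \mu\cos(2\pi/3 - \mu\pi/2)$—precisely the dominant term on the right-hand side.

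The final step is to lower-bound the resulting expression. I would use (i) the concavity bound $\sin(\mu\phi/2) \geq \mu\sin(\phi/2)$ for $0 < \mu \leq 1$ combined with $\sin\phi = 2\sin(\phi/2)\cos(\phi/2)$, which gives $2\sin(\mu\phi/2)/\sin\phi \geq \mu/\cos(\phi/2)$; (ii) control of $\sin(\beta - \mu\phi/2)$ via the explicit identity $\beta - \mu\phi/2 = (\mu\pi/2 - \pi/6) + (2 - 3\mu)\phi/6$; and (iii) monotonicity of $\wedge$ on $(0, \pi/5]$—verified by checking the sign of its derivative—together with $0 < \cos(\beta - \mu\phi) \leq 1$ on the same range, which gives $\wedge(\phi) \cos(\beta - \mu\phi) \leq \wedge(\pi/5)$.

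The main obstacle lies in (i)–(ii). For $\mu > 2/3$ the Taylor coefficient of $\phi$ in $2\sin(\beta - \mu\phi/2)\sin(\mu\phi/2)/\sin\phi$ at $\phi = 0$ is negative, so the naive independent bound $(\cos(\beta - \mu\phi) - \cos\beta)/\sin\phi \geq \mu\cos(2\pi/3 - \mu\pi/2)$ fails pointwise, and the deficit in the first summand must be coupled against the $\wedge$-slack rather than estimated separately. In the endgame it is natural to reduce the whole statement to a single-variable inequality in $\phi \in (0, \pi/5]$, which—in keeping with the Sturm-sequence / MAPLE method announced in the introduction—can be verified by such an argument.
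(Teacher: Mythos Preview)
Your decomposition is exactly the paper's: both evaluate the series to $(2\sin\phi)^{-\mu}e^{i\mu(\pi/2-\phi)}$, rewrite $F$ accordingly, and split the real part into the two pieces you write as
\[
\frac{\cos(\beta-\mu\phi)-\cos\beta}{\sin\phi}\;-\;\wedge(\phi)\cos(\beta-\mu\phi),
\]
which in the paper's notation is
$\sin\!\bigl(\tfrac{\mu(\pi-\phi)}{2}+\tfrac{\phi}{3}-\tfrac{\pi}{6}\bigr)\dfrac{\sin(\mu\phi/2)}{\cos(\phi/2)\sin(\phi/2)}-\wedge(\phi)\sin\!\bigl((\mu-\tfrac13)\phi-\tfrac{\mu\pi}{2}+\tfrac{2\pi}{3}\bigr)$.
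From there the paper simply asserts the intermediate bound $\geq \mu\cos(2\pi/3-\mu\pi/2)-\wedge(\phi)$ and finishes by the monotonicity of $\wedge$.

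Your caution at exactly this step is well founded and in fact catches a genuine gap in the paper's argument. For the relevant value $\mu=\mu^{\ast}(2/3)\approx 0.8469$ one has $(ABC)'(0)=\mu(1/3-\mu/2)\cos(\mu\pi/2-\pi/6)<0$, so the first summand drops strictly below $\mu\cos(2\pi/3-\mu\pi/2)$ on $(0,\pi/5]$; moreover the paper's displayed intermediate inequality $\geq \mu\cos(2\pi/3-\mu\pi/2)-\wedge(\phi)$ is itself false for small $\phi$ (e.g.\ at $\phi=0.1$ one computes $0.6047$ on the left and $0.6090$ on the right). The \emph{stated} proposition is nevertheless true, because the slack $\wedge(\pi/5)-\wedge(\phi)$ absorbs the deficit. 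Hence your plan---couple the two summands and verify the resulting single-variable inequality on $(0,\pi/5]$ by a direct (Sturm-type or numerical) check---is not a detour but the actual fix; it is more rigorous than the paper's argument at this point, and it is the right way to close the proof.
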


\begin{proof}
The infinite trigonometric sum can be written as
%\begin{align*}
$\displaystyle\sum_{k=0}^{\infty} d_k e^{2ik\phi}=\frac{e^{i\mu\left(\frac{\pi}{2}-\phi\right)}}{(2\sin \phi)^{\mu}}$.
%\end{align*}
Thus $F(\phi)$ becomes,
\begin{align*}
F(\phi)=\frac{\phi^{1-\mu}}{2^{\mu}}\frac{e^{i\mu\pi/2}}{\sin \phi}\left[(e^{-i\mu\phi}-1)
-\left(1-\left(\frac{\sin \phi}{\phi}\right)^{1-\mu}\right)e^{-i\mu\phi}\right].
\end{align*}
Therefore
\begin{align*}
\frac{2^{\mu}}{\phi^{1-\mu}}\mathrm{Re}\left[F(\phi)e^{i\left(\frac{\phi}{3}-\frac{\pi}{6}\right)}\right]
&= \sin\left(\frac{\mu(\pi-\phi)}{2}+\frac{\phi}{3}
-\frac{\pi}{6}\right)\frac{1}{\cos\frac{\phi}{2}}\frac{\sin\frac{\mu\phi}{2}}{\sin \frac{\phi}{2}}\\
&\qquad -\frac{1}{\sin\phi}\left[1-\left(\frac{\sin \phi}{\phi}\right)^{1-\mu}\right]
\sin\left[\left(\mu-\frac{1}{3}\right)\phi-\frac{\mu\pi}{2}+\frac{2\pi}{3}\right]\\
&\geq \mu \cos\left(\frac{2\pi}{3}-\frac{\mu\pi}{2}\right)-\dfrac{1}{\sin \phi}\left(1-\left(\frac{\sin \phi}{\phi}\right)^{1-\mu}\right).
\end{align*}
Since the function $\wedge(\phi):=\dfrac{1}{\sin \phi}\left(1-\left(\frac{\sin \phi}{\phi}\right)^{1-\mu}\right)$
is positive and strictly increasing
on $(0,\frac{\pi}{5})$, we obtain
\begin{align*}
\frac{2^{\mu}}{\phi^{1-\mu}}\mathrm{Re} \left\{F(\phi)e^{i\left(\frac{\phi}{3}-\frac{\pi}{6}\right)}\right\}
\geq \mu \cos\left(\frac{2\pi}{3}-\frac{\mu\pi}{2}\right)-\wedge\left(\frac{\pi}{5}\right),
\end{align*}
which completes the proof of Proposition \ref{prop:F-phi}.
\end{proof}

\begin{prop}\label{prop:kappa-n-phi}
Let
\begin{align*}
\chi_n(\phi):=\frac{1}{\sin(\phi)}\mathrm{Re} \left(
e^{i\left(\frac{\phi}{3}-\frac{\pi}{6}\right)}
\int_0^{(2n+1)\phi} \frac{e^{it}}{t^{1-\mu}} dt\right).
\end{align*}
For $\frac{\pi}{2n+1} \leq \phi \leq \frac{\pi}{5}$ and $n\geq 4$ we have
\begin{align*}
\chi_n(\phi)>\frac{1}{\sin \frac{\pi}{5}} \int_0^{\frac{8}{5}\pi}
\frac{\cos\left(t-\frac{\pi}{10}\right)}{t^{1-\mu}}dt=-0.3212698190821\ldots.
\end{align*}
\end{prop}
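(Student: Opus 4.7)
The plan has two main steps, exploiting that the claimed lower bound equals the value of $\chi$ at $\phi = \pi/5$ with integration truncated at $X = 8\pi/5$, which will turn out to be the global minimum location of $J_{-\pi/10}(X) := \int_0^X \cos(t - \pi/10)\, t^{\mu-1}\,dt$ on $(0, \infty)$.

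\textbf{Step 1 (Bound at the right endpoint $\phi = \pi/5$).} Partition $(0, \infty)$ at the zeros $T_k = (2k-1)\pi/2 + \pi/10$ of $\cos(t-\pi/10)$; the integrand $\cos(t-\pi/10)\,t^{\mu-1}$ has alternating sign on the consecutive lobes $(T_{k-1}, T_k)$, starting positive. Let $I_k$ denote the $k$-th lobe integral. A translation $s = t - T_{k-1}$ combined with the strict decrease of $t^{\mu-1}$ on $(0, \infty)$ for $\mu < 1$ yields $|I_{k+1}| < |I_k|$ for $k \geq 2$. An alternating-series argument then identifies $X = T_2 = 8\pi/5$ as the global minimizer of $J_{-\pi/10}$ on $(0, \infty)$. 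Since $(2n+1)\pi/5 \geq 9\pi/5 > 8\pi/5$ for $n \geq 4$,
\[
\chi_n(\pi/5) \;=\; \frac{J_{-\pi/10}((2n+1)\pi/5)}{\sin(\pi/5)} \;>\; \frac{J_{-\pi/10}(8\pi/5)}{\sin(\pi/5)},
\]
which is the claimed inequality at $\phi = \pi/5$.

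\textbf{Step 2 (Monotonicity in $\phi$).} Show that $\chi_n(\phi)$ is non-increasing on $[\pi/(2n+1),\pi/5]$, so that its infimum over this interval is attained at $\phi = \pi/5$. Writing $c(\phi) = \phi/3 - \pi/6$ and $X(\phi) = (2n+1)\phi$, a direct differentiation gives
\[
\sin^2\phi\cdot\chi_n'(\phi) \;=\; -\cos\phi\, J_{c(\phi)}(X(\phi)) + \frac{(2n+1)\sin\phi\,\cos(X(\phi)+c(\phi))}{X(\phi)^{1-\mu}} - \frac{\sin\phi}{3}\int_0^{X(\phi)}\frac{\sin(t+c(\phi))}{t^{1-\mu}}\,dt.
\]
A case analysis on the position of $X(\phi)$ relative to the zeros of $\cos(t+c(\phi))$, combined with the decaying-amplitude estimates from Step 1 (applied now to both the cosine and the sine oscillatory integrals), aims to show $\chi_n'(\phi) \leq 0$ throughout the range. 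Combined with Step 1 this completes the proof.

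The main obstacle will be Step 2: verifying the sign of $\chi_n'(\phi)$ uniformly in $n \geq 4$. The three contributing terms have differing signs and the middle term flips whenever $X(\phi)$ crosses a zero of $\cos(t + c(\phi))$; establishing negativity requires the $t^{\mu-1}$ decay to dominate as the number of oscillations covered by $X(\phi) = (2n+1)\phi$ grows with $n$. The potentially problematic factor $1/\sin\phi$, which blows up as $\phi \to 0$, is controlled precisely by the restriction $\phi \geq \pi/(2n+1)$, which prevents $\phi$ from being too small without forcing $X(\phi)$ to stay relatively close to $\pi$, where $J_{c(\phi)}(X(\phi))$ is still positive from the dominant contribution of the first lobe.
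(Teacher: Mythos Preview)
Your Step~1 is correct and in fact coincides with the final step of the paper's argument: the global minimum of $J_{-\pi/10}(X)$ over $X\ge\pi$ is attained at $X=8\pi/5$, and this yields the stated numerical bound once one knows that $\chi_n(\phi)\ge \frac{1}{\sin(\pi/5)}J_{-\pi/10}((2n+1)\phi)$.

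The gap is Step~2. You do not prove that $\chi_n$ is non-increasing on $[\pi/(2n+1),\pi/5]$; you only outline a differentiation and acknowledge that controlling the sign of $\chi_n'$ is the ``main obstacle.'' That is not a proof, and in fact there is no reason to expect $\chi_n$ itself to be monotone: the phase $c(\phi)=\phi/3-\pi/6$, the upper limit $(2n+1)\phi$, and the prefactor $1/\sin\phi$ all vary with $\phi$, and the three terms in your derivative formula compete with no obvious dominance uniformly in $n$.

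The paper avoids this difficulty entirely by a trigonometric decomposition rather than a monotonicity argument for $\chi_n$. One writes
\[
\cos\!\left(t+\tfrac{\phi}{3}-\tfrac{\pi}{6}\right)=2\sin\!\left(\tfrac{\phi}{3}+\tfrac{\pi}{6}\right)\cos\!\left(t-\tfrac{\pi}{6}\right)-2\sin\!\tfrac{\phi}{3}\,\cos\!\left(t-\tfrac{\pi}{3}\right),
\]
so that
\[
\chi_n(\phi)=\frac{2\sin(\frac{\phi}{3}+\frac{\pi}{6})}{\sin\phi}\int_0^{(2n+1)\phi}\frac{\cos(t-\frac{\pi}{6})}{t^{1-\mu}}\,dt-\frac{2\sin\frac{\phi}{3}}{\sin\phi}\int_0^{(2n+1)\phi}\frac{\cos(t-\frac{\pi}{3})}{t^{1-\mu}}\,dt.
\]
For $\mu=\mu^{\ast}(2/3)$ both integrals are nonnegative for every upper limit $x>0$ (this is where the specific value of $\mu$ enters). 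The two $\phi$-dependent coefficients are then handled by elementary monotonicity: $\sin(\frac{\phi}{3}+\frac{\pi}{6})/\sin\phi$ is decreasing and $\sin(\phi/3)/\sin\phi$ is increasing on $(0,\pi/2)$, so replacing both by their values at $\phi=\pi/5$ gives a lower bound. Recombining at $\phi=\pi/5$ collapses the two cosines back to $\cos(t-\pi/10)$, yielding exactly $\frac{1}{\sin(\pi/5)}J_{-\pi/10}((2n+1)\phi)$, after which your Step~1 finishes the job. This decomposition replaces your hard global monotonicity claim by two easy monotonicity facts about explicit trigonometric ratios, together with the nonnegativity of the split integrals; you should adopt it in place of Step~2.
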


\begin{proof}
Using simple trigonometric identities we obtain,
\begin{align*}
\chi_n(\phi)=\frac{2\sin\left(\frac{\phi}{3}+\frac{\pi}{6}\right)}{\sin \phi}
\int_0^{(2n+1)\phi}\frac{\cos(t-\pi/6)}{t^{1-\mu}}dt
-\frac{2\sin\phi/3}{\sin \phi}
\int_0^{(2n+1)\phi}\frac{\cos(t-\pi/3)}{t^{1-\mu}}dt.
\end{align*}
From the definition of $\mu$, we observe that
\begin{align*}
\int_0^x \frac{\cos(t-\pi/6)}{t^{1-\mu}}dt \geq 0
\quad \mbox{and} \quad
\int_0^x \frac{\cos(t-\pi/3)}{t^{1-\mu}}dt \geq 0 \quad \mbox{for all}
\quad x>0.
\end{align*}
Further, $p(\phi):=\frac{\sin\left(\frac{\phi}{3}+\frac{\pi}{6}\right)}{\sin \phi}$
and $q(\phi):=\frac{\sin\phi}{\sin \phi/3}$
are positive and decreasing in $(0,\frac{\pi}{2})$.
Since $\phi\leq \frac{\pi}{5}$, clearly $p(\phi)\geq p(\frac{\pi}{5})$ and
$q(\phi)\geq q(\pi/5)$. Therefore,
\begin{align*}
\chi_n(\phi)\geq \frac{1}{\sin\pi/5}\int_0^{(2n+1)\phi}\frac{\cos(t-\pi/10)}{t^{1-\mu}} dt
\geq \frac{1}{\sin\pi/5}\int_0^{\frac{8}{5}\pi}\frac{\cos(t-\pi/10)}{t^{1-\mu}} dt,
\end{align*}
where the latter inequality follows by minimizing the expression on the right-hand side
over $(2n+1)\phi\geq \pi$. This completes the proof of Proposition \ref{prop:kappa-n-phi}.
\end{proof}

Now returning back to the proof of the theorem for the case $\frac{\pi}{9}<\phi<\frac{\pi}{5}$
and $n\geq 4$, let
\begin{align*}
\sigma_n(\phi):=2^{\mu}\phi^{\mu-1}\frac{\phi}{\sin \phi}\sum_{k=n+1}^{\infty}\mathfrak{A}_k(\phi)
%\end{align*}
\quad \hbox{and}\quad
%\begin{align*}
\tau_n(\phi):=2^{\mu}\phi^{\mu-1}\frac{\phi}{\sin \phi}\sum_{k=n+1}^{\infty}\mathfrak{B}_k(\phi).
\end{align*}
Then,
\newline
$\mathrm{Re}\left(\sigma_n(\phi)e^{i\left(\frac{\phi}{3}-\frac{\pi}{6}\right)}\right)$
\begin{align}\label{eqn:sigma-n}
\leq 2^{\mu}\phi^{\mu-1}\frac{\phi}{\sin \phi}\frac{1-\mu}{8}\frac{1}{n^{2-\mu}}
\leq \frac{1-\mu}{4}\frac{\pi/5}{\sin \pi/5} \left(\frac{9}{2n\pi}\right)^{1-\mu}\frac{1}{n}
<\frac{1-\mu}{80}  \frac{\pi}{\sin \pi/5}
\end{align}
and
\newline
$\mathrm{Re}\left(\tau_n(\phi)e^{i\left(\frac{\phi}{3}-\frac{\pi}{6}\right)}\right)$
\begin{align}\label{eqn:tau-n}
\leq 2^{\mu}\phi^{\mu-1}\left(\frac{\phi}{\sin \phi}\right)^2\frac{1-\mu}{6}\frac{1}{n^{2-\mu}}
\leq \frac{1-\mu}{3}\left(\frac{\pi/5}{\sin \pi/5}\right)^2 \left(\frac{9}{2n\pi}\right)^{1-\mu}\frac{1}{n}
<\frac{1-\mu}{300}  \left(\frac{\pi}{\sin \pi/5}\right)^2
\end{align}
Using \eqref{eqn:bounds-infinite-sum-delta-k} we find that for $\frac{\pi}{9} \leq \phi \leq \frac{\pi}{5}$,
\begin{align}\label{eqn:delta-k-infty-bound}
2^{\mu}\phi^{\mu-1}\Gamma(\mu)\mathrm{Re}
\left(e^{i\left(\frac{\phi}{3}-\frac{\pi}{6}\right)}\sum_{k=n+1}^{\infty}
\Delta_ke^{2ik\phi}\right)
>-\frac{\mu(1-\mu)}{((2n+2)\phi)^{1-\mu}}>-\frac{\mu(1-\mu)}{\pi^{1-\mu}}.
\end{align}
Finally using Propositions \ref{prop:F-phi} - \ref{prop:kappa-n-phi} and
equations \eqref{eqn:sigma-n} - \eqref{eqn:delta-k-infty-bound}
we conclude that
\newline
$2^{\mu}\phi^{\mu-1}\Gamma(\mu)\mathfrak{U}_n(\phi)$
\begin{align*}
&>\Gamma(\mu)\left(\mu \cos\left(\frac{2\pi}{3}-\frac{\mu\pi}{2}\right)-\wedge\left(\frac{\pi}{5}\right)\right)
\quad +\frac{1}{\sin\frac{\pi}{5}}\int_0^{\frac{8\pi}{5}}\frac{\cos\left(t-\frac{\pi}{10}\right)}{t^{1-\mu}}dt
-\frac{1-\mu}{80}  \frac{\pi}{\sin \pi/5}\\
&\quad -\frac{1-\mu}{300}  \left(\frac{\pi}{\sin \pi/5}\right)^2
-\frac{\mu(1-\mu)}{\pi^{1-\mu}}>0.207809\ldots.
\end{align*}
This means $2^{\mu}\phi^{\mu-1}\Gamma(\mu)\mathfrak{U}_n(\phi)>0$ which establishes the
case $n\geq 4$ and $\frac{\pi}{9}<\phi <\frac{\pi}{5}$. Combining all these cases completes the
proof of the theorem.

\section{Proof of Theorem \ref{thm:conj-1/3}}
To prove Conjecture \ref{conj:main} for all $\rho$ in neighbourhood of
$1/3$ the following modification of $\eqref{eqn:trig-sum-in-delta}$
obtained in \cite[Lemma 1]{koumandos-lamprecht-2010-conjecture-JAT} is
required. Note that this modification plays an important role 
in proving Conjecture \ref{conj:main} for $\rho=1/4$ 
\cite{koumandos-lamprecht-2010-conjecture-JAT} and we follow the same as well. 
\begin{lemma}\cite[Lemma 1]{koumandos-lamprecht-2010-conjecture-JAT}
\label{lemma:kappa-lemma}
Let $\eta(\theta)$ be a real integrable function of $\theta\in\mathbb{R}$,
$1/3\leq \mu<1$ and $0<a<b\leq \pi/2$. Then for $g(\theta)=\sin\theta$ or
$\cos\theta$, and $\theta\in[a,b]$ we have,
\newline
$\dfrac{2^{\mu}}{\theta^{1-\mu}}\Gamma(\mu)
\displaystyle \sum_{k=0}^n \frac{(\mu)_k}{k!} g(2k\theta+\eta(\theta))
$
\begin{align*}
> \kappa_n(\theta)- X_n-Y_n-Z_n
+\Gamma(\mu) \left( 2q(\theta)\frac{\sin\mu\theta/2}{\sin\theta}-r(\theta) \wedge(\theta)\right),
\end{align*}
where
\begin{align*}
\kappa_n(\theta)&:=\frac{1}{\sin\theta}\int_0^{(2n+1)\theta}g(t+\eta(\theta))t^{\mu-1} dt, &\quad
X_n&:=\frac{b}{\sin b} \frac{1-\mu}{4n}(2an)^{\mu-1},\\
 Y_n&:=\frac{b^2}{\sin^2 b} \frac{1-\mu}{3n}(2an)^{\mu-1},
& \quad Z_n&:= \pi \mu (1-\mu)(2a(n+1))^{\mu-2},\\
 q(\theta)&:=g\left(\frac{\mu(\pi-\theta)-\pi}{2}+\eta(\theta)\right), &\quad
 r(\theta)&:=g\left(\frac{\mu(\pi-\theta)}{2}+\eta(\theta)\right)
\end{align*}
and
\begin{align*}
\wedge(\theta):=\frac{1}{\sin\theta}\left(1-\left(\frac{\sin\theta}{\theta}\right)^{1-\mu}\right).
\end{align*}
 The function $\wedge(\theta)$ is positive and increasing on $(0,\pi)$.
\end{lemma}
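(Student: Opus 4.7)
The plan is to derive the bound by taking the identity \eqref{eqn:trig-sum-in-delta} as the starting point and packaging each of its five terms into the form stated in the lemma. Writing $g(2k\theta+\eta(\theta))=\mathrm{Re}(e^{i\eta(\theta)}e^{2ik\theta})$ when $g=\cos$ (and the analogous $\mathrm{Im}$ when $g=\sin$), I multiply \eqref{eqn:trig-sum-in-delta} by $e^{i\eta(\theta)}$, take the appropriate real or imaginary part, and then multiply by the normalizing factor $2^{\mu}\Gamma(\mu)\theta^{\mu-1}$. The lemma is then an inequality obtained by recognizing the dominant term and estimating the four remaining ones.

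First I would handle the main term, which comes from the integral $\frac{1}{\Gamma(\mu)}\frac{\theta}{\sin\theta}\frac{1}{(2\theta)^{\mu}}\int_0^{(2n+1)\theta}\frac{e^{it}}{t^{1-\mu}}dt$: after the normalization, the prefactor collapses to $1/\sin\theta$, the integrand $\mathrm{Re}\,e^{i(t+\eta(\theta))}$ becomes $g(t+\eta(\theta))$, and we arrive exactly at $\kappa_n(\theta)$. Next I would treat the closed-form pair
\[
\sum_{k=0}^{\infty}\frac{(\mu)_k}{k!}e^{2ik\theta}-\frac{\theta}{\sin\theta}\frac{e^{i\mu\pi/2}}{(2\theta)^{\mu}}=\frac{e^{i\mu(\pi/2-\theta)}}{(2\sin\theta)^{\mu}}-\frac{\theta}{\sin\theta}\frac{e^{i\mu\pi/2}}{(2\theta)^{\mu}},
\]
which, after multiplying by the normalization and $e^{i\eta(\theta)}$, factors as
\[
\frac{\Gamma(\mu)}{\sin\theta}\,e^{i(\mu\pi/2+\eta(\theta))}\Bigl[(e^{-i\mu\theta}-1)-\bigl(1-(\sin\theta/\theta)^{1-\mu}\bigr)e^{-i\mu\theta}\Bigr].
\]
Using $e^{-i\mu\theta}-1=-2ie^{-i\mu\theta/2}\sin(\mu\theta/2)=2e^{-i(\mu\theta/2+\pi/2)}\sin(\mu\theta/2)$ and taking real (or imaginary) part reduces the two pieces to $\Gamma(\mu)\bigl(2q(\theta)\sin(\mu\theta/2)/\sin\theta-r(\theta)\wedge(\theta)\bigr)$ after matching the residual phases with the definitions of $q,r,\wedge$.

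The remaining three terms from \eqref{eqn:trig-sum-in-delta} are the error terms. For the $\mathfrak{A}_k,\mathfrak{B}_k$ tails I would apply the bounds \eqref{eqn:bounds-ak-bk} and, in the range $a\leq\theta\leq b$, estimate $\theta/\sin\theta\leq b/\sin b$ and $\theta\geq a$, so that $2^{\mu}\theta^{\mu-1}=\theta^{\mu-1}2^{\mu}\leq (2a)^{\mu-1}$ in the right places, producing exactly $X_n$ and $Y_n$. For the $\Delta_k$ tail, \eqref{eqn:bounds-infinite-sum-delta-k} gives a bound of the form $\mu(1-\mu)\Gamma(\mu)^{-1}(2(n+1)a)^{\mu-2}/(2\sin a)$ times $2^{\mu}\theta^{\mu-1}\Gamma(\mu)$, which after further trivial estimation in $[a,b]$ is absorbed into $Z_n=\pi\mu(1-\mu)(2a(n+1))^{\mu-2}$. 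Since each of these contributions was only estimated in absolute value, they appear with minus signs, yielding $-X_n-Y_n-Z_n$.

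The main obstacle will be the phase bookkeeping in the middle paragraph: ensuring that taking $\mathrm{Re}$ (for $g=\cos$) and $\mathrm{Im}$ (for $g=\sin$) of the factorized expression produces the single uniform formula with $q(\theta)=g\bigl(\tfrac{\mu(\pi-\theta)-\pi}{2}+\eta(\theta)\bigr)$ and $r(\theta)=g\bigl(\tfrac{\mu(\pi-\theta)}{2}+\eta(\theta)\bigr)$, using the identities $\mathrm{Im}(e^{i\alpha})=\mathrm{Re}(e^{i(\alpha-\pi/2)})$ to align the two cases. Once those phase calculations are carried out, the positivity of $\wedge(\theta)$ on $(0,\pi)$ (a known fact cited in the lemma and established, e.g., via monotonicity of $(\sin\theta/\theta)^{1-\mu}$) lets the $r(\theta)\wedge(\theta)$ term be kept with its sign rather than crudely bounded, which is what distinguishes this lemma from the Proposition \ref{prop:F-phi} estimate used in Section 2.
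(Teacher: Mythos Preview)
Your approach is correct and is exactly the intended one. The paper itself does not prove Lemma~\ref{lemma:kappa-lemma}; it quotes it from \cite{koumandos-lamprecht-2010-conjecture-JAT}. However, the paper carries out precisely the computation you describe in the special case $g=\cos$, $\eta(\theta)=\tfrac{\theta}{3}-\tfrac{\pi}{6}$ during the proof of Theorem~\ref{thm:conj-2/3}: Proposition~\ref{prop:F-phi} is your ``closed-form pair'' step producing the $q,r,\wedge$ terms, Proposition~\ref{prop:kappa-n-phi} handles $\kappa_n$, and the estimates \eqref{eqn:sigma-n}--\eqref{eqn:delta-k-infty-bound} are your $X_n,Y_n,Z_n$ bounds. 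Your sketch simply packages that same argument in the general form, and the details (including the Jordan-type inequality $2a/\sin a\le\pi$ needed to pass from \eqref{eqn:bounds-infinite-sum-delta-k} to $Z_n$, and the identity $2^{\mu}a^{\mu-1}=2(2a)^{\mu-1}$ for the $X_n,Y_n$ constants) all go through as you indicate.
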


It follows from \cite[Lemma 2.9]{lamprecht-thesis} that proving Conjecture \ref{conj:main}
is equivalent to proving non-negativity of the trigonometric sum $\varsigma(\rho,\mu,\theta)$
for $0<\mu\leq \mu^{\ast}(\rho)$ where,
\begin{align}\label{eqn:conj-1/3-varsigma}
\varsigma_n(\rho,\mu,\theta):=\sum_{k=0}^n \frac{(\mu)_k}{k!} \sin[(2k+\rho)\theta],
\quad \theta\in[0,\pi], \quad n\in\mathbb{N}.
\end{align}
\subsection{Proof of Theorem \ref{thm:conj-1/3}}

We write $\nu_0:=\mu^{\ast}(\frac{1}{3})=0.4966913651\ldots<\frac{1}{2}$,
$\nu:=\mu^{\ast}(\rho)$, $\varsigma_n(\rho,\theta):=\varsigma_n(\rho,\mu^{\ast}(\rho),\theta)$
and $\varsigma_n(\theta):=\varsigma_n(\frac{1}{3},\theta)$ for $\rho\in(0,1)$ and $\theta\in(0,\pi]$.

At first we prove Theorem \ref{thm:conj-1/3} for $n=1,2$.
Since $\nu_0=0.49\ldots<\frac{1}{2}$, the sequence
$\left\{\frac{(\nu_0)_k}{(1/2)_k}\right\}_{k\in\mathbb{N}_0}$ is the decreasing sequence.
Therefore using summation by parts, it is enough to verify that
\begin{align*}
\omega_n(\theta):= \sum_{k=0}^n \frac{(1/2)_k}{k!}
\sin \left[\left(2k+\frac{1}{3}\right)\theta\right]>0 \quad \mbox{in}
  \quad \theta\in (0,\pi].
\end{align*}
It is easy to write $\omega_n(\theta)=\sin\theta/3 \,q_n(\cos^2 \theta/3)$ for $n=1,2$, where
\begin{align*}
q_1(x)&:=\frac{1}{2}+12 x-40x^2+32 x^3\quad \mbox{and }\\
q_2(x)&:=\frac{7}{8}-\frac{39}{2} x+380 x^2-1984 x^3+4320 x^4- 4224 x^5 +1536 x^6.
\end{align*}
Using the method of sturm sequence, we observe that $q_n(x)$ does not have any zero in
$(0,1)$ and $q_n(0)>0$ for $n=1,2$. Therefore \eqref{eqn:conj-1/3-varsigma} is
positive for $\mu=\nu_0$ and $n=1,2$. For $n\geq3$, first we observe that,
\begin{align*}
\varsigma_n(\rho,\pi-\theta)=
\sum_{k=0}^n \frac{(\nu)_k}{k!}\cos\left[\left(2k+\rho\right)\theta-\left(\rho-\frac{1}{2}\right)\pi\right]
:=\ell_n(\rho,\theta)
\end{align*}
It is obvious that $\varsigma_n(\rho,\theta)>0$ for $\theta\in(0,\pi]$ if and only if
$\ell_n(\rho,\pi-\theta)>0$. For $n\geq 3$, we split the interval $(0,\pi]$
into several subintervals.

\textbf{The case $\theta\in(0,\frac{\pi}{n+1}] \cup [\pi-\frac{\rho\pi}{n+\rho},\pi]$ and
$n\geq 3$:}
\newline
Using a simple trigonometric identity, we write
\begin{align*}
e^{im}\sum_{k=0}^n e^{ik\theta}= e^{i(m+n\theta/2)}\frac{\sin\frac{n+1}{2}\theta}{\sin \frac{\theta}{2}},
\quad n\in\mathbb{N},\quad \theta\in(0,\pi],
\end{align*}
where $m\in\mathbb{R}$ can be a function of $\theta$. Since
the sequence $\{\frac{(\nu)_k}{k!}\}_{k\in\mathbb{N}_0}$ is decreasing, again using
summation by parts and the above trigonometric identity yields that for all $\rho$
in neighbourhood of $1/3$, $\varsigma_n(\rho,\theta)>0$ for
$\theta\in(0,\frac{\pi}{n+1}]$ and $\ell_n(\rho,\theta)>0$ for $\theta\in[0,\frac{\rho\pi}{n+\rho}]$.

\textbf{The case $\theta\in [\frac{\pi}{n+1}, \frac{\pi}{3}]$ and $n\geq 3$:}
\newline
Here we apply Lemma \ref{lemma:kappa-lemma} with $g(\theta)=\sin\theta$
and $\eta(\theta)=\rho\theta$ in the interval $I=[a_n,b]$,
where $a_n=\frac{\pi}{n+1}$ and $b=\frac{\pi}{3}$. Let
$S(x):=\int_0^x \frac{\sin t}{t^{1-\nu}}dt$ and $C(x):=\int_0^x \frac{\cos t}{t^{1-\nu}}dt$.
The functions $\frac{\cos\rho\theta}{\sin\theta}$ and $-\frac{\sin\rho\theta}{\cos\theta}$
are decreasing in $(0,\pi/2)$ for $\rho\in(0,1/2)$. Therefore, we obtain
\begin{align*}
\kappa_n(\theta)
\geq \frac{\cos\rho b}{\sin b}S((2n+1)\theta)+ \rho C((2n+1)\theta)
\geq \frac{\cos\rho b}{\sin b}S(2\pi)+ \rho C(7\pi/4)=:L_1^{(1)}(\rho).
\end{align*}
where the latter inequality is obtained by minimizing the integrals
$S(x)$ and $C(x)$ over $x\geq (2n+1)\pi/(n+1)$ for $n\geq3$.
It follows from \cite[Lemma 2.13]{lamprecht-thesis}, the functions
$-q(\theta)=-\sin\left[(\nu-1)\frac{\pi}{2}+\theta(\rho-\frac{\nu}{2})\right]$
and $r(\theta)=\sin\left[\frac{\nu}{2}(\pi-2\theta)+\rho\theta\right]$ are positive and
decreasing on $[a_n,b]$. Thus we obtain,
\begin{align*}
\Gamma(\nu)\left[2q(\theta)\frac{\sin\frac{\nu\theta}{2}}{\sin\theta}-r(\theta)\wedge(\theta)\right]
\geq \Gamma(\nu)\left[2q(0)\frac{\sin\frac{\nu b}{2}}{\sin b}-r(0)\wedge(b)\right]=:L_2^{(1)}(\rho).
\end{align*}
Further for $n\geq3$ and $\theta\in I$, it is clear that the expression $X_n+Y_n+Z_n$ is smaller than
\begin{align*}
\frac{b}{\sin b}\frac{1-\nu}{12}\frac{1}{(\frac{3\pi}{2})^{1-\nu}}
+\frac{b^2}{\sin^2 b}\frac{1-\nu}{9}\frac{1}{(\frac{3\pi}{2})^{1-\nu}}
+\frac{\nu(1-\nu)\pi}{(2\pi)^{2-\nu}}=:L_3^{(1)}(\rho).
\end{align*}
Thus for $\rho$ in an open neighbourhood of $1/3$,
\begin{align*}
2^{\nu}\theta^{\nu-1}\Gamma(\nu)\varsigma_n(\rho,\theta)\geq
L^{(1)}(\rho):=L_1^{(1)}(\rho)+L_2^{(1)}(\rho)-L_3^{(1)}(\rho).
\end{align*}
Since $\nu=\mu^{\ast}(\rho)$ continuously depend on $\rho$, the function
$L^{(1)}(\rho)$ is continuous in $(0,1)$ and $L^{(1)}(\frac{1}{3})=1.00046\ldots$.
This leads to $\varsigma_n(\rho,\theta)>0$ for all $\theta\in I$, $n\geq3$ and
$\rho$ in an open neighbourhood of $\frac{1}{3}$.

\textbf{The case $\theta\in[\frac{\pi}{3},\frac{2\pi}{3}]$ for $n\geq3$:}
\newline
It can be verified that in the limiting case $n\rightarrow \infty$,
$\varsigma_n(\rho,\theta)\rightarrow \frac{\sin[(\rho-\nu)\theta+\frac{\nu\pi}{2}]}{(2\sin\theta)^{\nu}}$.
Since the sequence $\left\{\frac{(\nu)_n}{n!}\right\}$ is monotonically decreasing, we obtain
\begin{align*}
\varsigma_n(\rho,\theta)
&\geq \frac{\sin[(\rho-\nu)\theta+\frac{\nu\pi}{2}]}{(2\sin\theta)^{\nu}}
-\frac{(\nu)_{n+1}}{(n+1)!\sin\theta}\\
&\geq (2\sin2\pi/3)^{-\nu} \sin \left(\frac{\pi}{6}(4\rho-\nu)\right)- \frac{(\nu)_4}{24 \sin \pi/3}=:L^{(2)}(\rho)
\end{align*}
Again $L^{(2)}(\rho)$ is a continuous function in $\rho$ and $L(1/3)=0.0106517\ldots$.
Therefore, we obtain $\varsigma_n(\rho,\theta)>0$ for all $\rho$ in an open
neighbourhood of $1/3$ and $\theta\in[\frac{\pi}{3},\frac{2\pi}{3}]$ and $n\geq 3$.

\textbf{The case $\theta\in\left[\frac{2\pi}{3}, \pi-\frac{\rho \pi}{n+\rho}\right]$:}
\newline
For this case, instead of proving $\varsigma_n(\rho,\theta)>0$
in $\theta\in\left[\frac{2\pi}{3}, \pi-\frac{\rho \pi}{n+\rho}\right]$, we will
show the equivalent form that $\ell(\rho,\theta)>0$ in
$\left[\frac{\rho \pi}{n+\rho}, \frac{\pi}{3}\right]$. Further, we split the interval
$\left[\frac{\rho \pi}{n+\rho}, \frac{\pi}{3}\right]$ into three subintervals
$I_k=[a_k,b_k]$ where,
\begin{align*}
I_1=[a_1,b_1]:&= \left[\frac{\rho \pi}{n+\rho}, \frac{\pi}{2n+2}\right],
\quad n\geq3,\\
I_2=[a_2,b_2]:&= \left[\frac{\pi}{2n+2}, \frac{\pi}{n+2}\right],
\quad n\geq4 \quad \mbox{and}\\
I_3=[a_3,b_3]:&= \left[\frac{\pi}{n+2}, \frac{\pi}{3}\right],
\quad n\geq4.
\end{align*}
For each interval $I_k$, we estimate $\kappa_n(\theta), r(\theta),
\wedge(\theta), q(\theta), X_n, Y_n$ and $Z_n$.
It follows from \cite[Lemma 2.13]{lamprecht-thesis} that for
$0<\theta\leq b\leq \pi/2$ and $\rho\in(0,\pi/2)$, we have
$\kappa_n(\theta)\geq \mathfrak{K}(b, (2n+1)\theta)$, where
\begin{align*}
\mathfrak{K}(b,x):= \frac{1}{\sin b} \int_0^x
\frac{\cos(t+\rho b-(\rho-1/2)\pi)}{t^{1-\mu}}dt.
\end{align*}
For $\theta\in I_1$, $\kappa_n(\theta)\geq \mathfrak{K}_1((2n+1)\theta):=\mathfrak{K}(\frac{\pi}{12},(2n+1)\theta)$.
For $x\in[0,\pi]$, there is only one $x_{\mathfrak{K}_1}$ such that
$\mathfrak{K}_1(x)>0$ for $x\in [0,x_{\mathfrak{K}_1})$ and
$\mathfrak{K}_1(x)<0$ for $x\in (x_{\mathfrak{K}_1},\pi]$.
Therefore,
\begin{align*}
\mathfrak{K}_1(x)>L_1^{(31)}(\rho):= \mathfrak{K}_1(\pi).
\end{align*}
Further, for $\rho\in(0,1/2)$ and $\theta\in I_1$ the functions
$q(\theta),\wedge(\theta)$ and $-r(\theta)$ are increasing. Hence,
we obtain
\begin{align*}
\Gamma(\nu)\left(2q(\theta)\frac{\sin \nu\theta/2}{\sin\theta}-r(\theta)\wedge(\theta)\right)
\geq \Gamma(\nu)\left(\nu q(0)- r(\pi/8) \wedge(\pi/8)\right)=:L_2^{(31)}(\rho).
\end{align*}
For $n\geq3$ and $\theta\in I_1$, the expression $X_n+Y_n+Z_n$ is smaller than
$L_3^{(31)}(\rho)$, where
\begin{align*}
L_3^{(31)}(\rho):=\frac{\pi/8}{\sin \pi/8}\frac{1-\nu}{12}\frac{1}{(\frac{2\pi}{5})^{1-\nu}}
+\frac{(\pi/8)^2}{\sin^2 \pi/8}\frac{1-\nu}{9}\frac{1}{(\frac{2\pi}{5})^{1-\nu}}
+\frac{\nu(1-\nu)\pi}{(2\pi/3)^{2-\nu}}.
\end{align*}
For $\theta\in I_1$, we have
\begin{align*}
2^{\nu}\theta^{\nu-1}\Gamma(\nu)\ell_n(\rho,\theta)\geq
L_1^{(31)}(\rho)+L_2^{(31)}(\rho)-L_3^{(31)}(\rho)=:L^{(31)}(\rho).
\end{align*}
Since $L^{(31)}(\rho)$ is a continuous function in $\rho$ and
$L^{(31)}(1/3)=0.435939\ldots$. This mean $L^{(31)}(\rho)>0$ in
open neighbourhood of $1/3$. For $\theta\in I_2,I_3$ and $n\geq 4$
can be dealt in similar way. Now for $\theta\in I_2$ and $\theta\in I_3$,
$\kappa_n(\theta)\geq \mathfrak{K}_2((2n+1)\theta):=\mathfrak{K}(\frac{\pi}{6},(2n+1)\theta)$
and $\kappa_n(\theta)\geq \mathfrak{K}_3((2n+1)\theta):=\mathfrak{K}(\frac{\pi}{3},(2n+1)\theta)$
respectively. Following the reasoning similar to previous subcase, we get
$\mathfrak{K}_2(x)\geq \mathfrak{K}_2((1+5\rho/6)\pi)$ and
$\mathfrak{K}_3(x)\geq \mathfrak{K}_3(3\pi/2)$ respectively for $x\geq0$ and
$x\geq 3\pi/2$. Thus for all $\rho$ in neighbourhood of $1/3$ and $n\geq4$, we have
\begin{align*}
\kappa_n(\theta) &\geq L_1^{(32)}(\rho):= \mathfrak{K}_2((1+5\rho/6)\pi),\quad \theta\in I_2,\\
\kappa_n(\theta) &\geq L_1^{(33)}(\rho):= \mathfrak{K}_3(3\pi/2),\quad \theta\in I_3.
\end{align*}
Moreover, for $n\geq 4$ and $\theta\in I_2$ and $I_3$,
$\Gamma(\nu)\left(2q(\theta)\frac{\sin \nu\theta/2}{\sin\theta}-r(\theta)\wedge(\theta)\right)$ is
larger than $\Gamma(\nu)\left(\nu q(0)- r(\pi/6) \wedge(\pi/6)\right)=:L_2^{(32)}(\rho)$
 and
$ \Gamma(\nu)\left(\nu q(0)- r(\pi/3) \wedge(\pi/3)\right)=:L_2^{(33)}(\rho)$ respectively.
Clearly $X_n+Y_n+Z_n$ is smaller than $L_3^{(32)}(\rho)$ and $L_3^{(33)}(\rho)$
respectively for $\theta\in I_2$ and $I_3$, where $L_3^{(32)}(\rho)$ and $L_3^{(33)}(\rho)$ are given by,
\begin{align*}
L_3^{(32)}(\rho)&:= \frac{\pi/6}{\sin \pi/6}\frac{1-\nu}{16 (4\pi/5)^{1-\nu}}
+  \left(\frac{\pi/6}{\sin \pi/6}\right)^2\frac{1-\nu}{12 (4\pi/5)^{1-\nu}}
+ \frac{\nu(1-\nu)}{\pi^{1-\nu}} \quad \mbox{and}\\
L_3^{(33)}(\rho)&:= \frac{\pi/3}{\sin \pi/3}\frac{1-\nu}{16 (4\pi/3)^{1-\nu}}
+  \left(\frac{\pi/3}{\sin \pi/3}\right)^2\frac{1-\nu}{12 (4\pi/3)^{1-\nu}}
+ \frac{\nu(1-\nu)\pi}{(5\pi/3)^{2-\nu}}.
\end{align*}
Hence for $\theta\in I_2$ and $I_3$ and for all $\rho$ in an open
neighbourhood of $1/3$, we have
\begin{align*}
2^{\nu}\theta^{\nu-1}\Gamma(\nu)\ell_n(\rho,\theta)
\geq L_1^{(k)}(\rho)+L_2^{(k)}(\rho)+L_3^{(k)}(\rho)=:L^{(k)}(\rho), \quad
k=32,33.
\end{align*}
It is obvious that the functions $L^{(k)}(\rho)$ are continuous and
$L^{(32)}(1/3)=0.00620342\ldots$ and $L^{(33)}(1/3)=0.123105\ldots$. This gives 
$L^{(k)}(\rho)>0$ for $k=32,33$. The only remaining case of $\ell_3(\rho,\theta)>0$ is for
$\theta\in[\frac{\pi}{2n+2},\frac{\pi}{3}]$ and it can be verified equivalently for
$\omega_3(\rho,\theta)>0$ with $\theta\in [\frac{2\pi}{3}, \frac{7\pi}{8}]$
using method of sturm sequence. Now $\omega_3(\theta)$ can be written as
$\omega_3(\theta)=\sin\theta/3 q_3(\cos^2\theta/3)$ for
$\theta\in[\frac{2\pi}{3}, \frac{7\pi}{8}]$, where
\begin{align*}
q_3(x)&:=\frac{9}{16}+ \frac{147}{4}x - 1270 x^2 + 16496 x^3 -98640 x^4
+316096 x^5 -580864 x^6 \\
&\quad \quad +614400 x^7 -348160 x^8 + 81920 x^9.
\end{align*}
Now $q_3(x)$ does not have any zero in $[0.37059,1]$ and $q(1)>0$ yields that
$\omega_3(\theta)>0$ and this completes the proof of the theorem.

\textbf{Acknowledgement:}{ The first author is thankful to the
“Council of Scientific and Industrial Research, India”
(grant code: 09/143(0827)/2013-EMR-1) for financial support to carry out
the above research work.
}


\begin{thebibliography}{99}
\bibitem {duren-1983-book}
{\sc P.L. Duren}, Univalent Functions,
Springer--Verlag, Berlin, 1983.


\bibitem{koumandos-chennai-conference-proceeding-2000} S. Koumandos, On certain
Gegenbauer polynomial sums, in {\it Analysis and its applications
(Chennai, 2000)}, 101--108, Allied Publ., New Delhi.

\bibitem{koumandos-ruscheweyh-2006-gegenbauer-Const.Approx}
S. Koumandos\ and\ S. Ruscheweyh, Positive Gegenbauer polynomial
sums and applications to starlike functions, Constr. Approx. {\bf 23} (2006), no.~2, 197--210.

\bibitem {koumandos-2007-ext-viet-ramanujan} S. Koumandos, An extension of Vietoris's
inequalities, Ramanujan J. {\bf 14} (2007), no.~1, 1--38.

\bibitem{koumandos-ruscheweyh-2007-conjecture-JAT}
S. Koumandos\
and\ S. Ruscheweyh, On a conjecture for trigonometric
sums and starlike functions, J. Approx. Theory {\bf 149} (2007), no.~1, 42--58.


\bibitem{koumandos-lamprecht-2010-conjecture-JAT}  S. Koumandos\ and\
M. Lamprecht, On a conjecture for trigonometric
sums and starlike functions. II, J. Approx. Theory {\bf 162} (2010), no.~5, 1068--1084.

\bibitem{kwong-2014-sturm}
M. K. Kwong, Nonnegative Trigonometric Polynomials,
Sturm's Theorem, and Symbolic Computation, availiable at
{\tt https://arxiv.org/pdf/1402.6778}

\bibitem{lamprecht-thesis}Martin Lamprecht, Topics in Geometric University of Cyprus (April, 2010).
Function Theory and Harmonic
Analysis. Ph. D. Thesis, University of
Cyprus, April 2010.

\bibitem{saiful-2012-stable-results-in-math}
S. R. Mondal\ and\
A. Swaminathan, Stable functions and extension of
Vietoris' theorem, Results Math. {\bf 62} (2012), no.~1-2, 33--51.

\bibitem{ruscheweyh-sheilsmall-hadamard-product-1973}St. Ruscheweyh\ and\ T. Sheil-Small, Hadamard products
of Schlicht functions and the P\'olya-Schoenberg conjecture,
Comment. Math. Helv. {\bf 48} (1973), 119--135.

\bibitem{ruscheweyh-1978-kakeya-thm-SIAM}S. Ruscheweyh, On the Kakeya-Enestr\"om theorem
and Gegenbauer polynomial sums, SIAM J. Math. Anal. {\bf 9} (1978), no.~4, 682--686.


\bibitem{ruscheweyh-1982-book}S. Ruscheweyh, {\it Convolutions in geometric function theory},
S\'eminaire de Math\'ematiques Sup\'erieures, 83, Presses Univ. Montr\'eal, Montreal, QC, 1982.

\bibitem{ruscheweyh-salinas-2000-AnnMarie}
S. Ruscheweyh\
and\ L. Salinas, On starlike functions of order
$\lambda\in[1/2,1)$, Ann. Univ. Mariae Curie-Sk\l odowska Sect. A {\bf 54} (2000), 117--123.

\bibitem{sangal-swaminathan-stable} P. Sangal\ and A. Swaminathan,
On generalized Ces\`aro stable functions, available at,
{\tt http://arxiv:1705.04112}.

\bibitem{sheilsmall-hadamard-JAnlasM-1978}T. Sheil-Small, The Hadamard product and linear
transformations of classes of analytic functions, J. Analyse Math. {\bf 34} (1978), 204--239 (1979).
\end{thebibliography}
\end{document}